\numberwithin{equation}{section}
\def\p{\partial}
\def\l{\lambda}
\def\cC{{\cal C}}
\def\cH{{\cal H}}
\def\R{{\mathbb R}}
\def\C{{\mathbb C}}
\def\cC{{\mathcal C}}
\def\cH{{\mathcal H}}
\def\R{{\mathbb R}}
\def\C{{\mathbb C}}
\newtheorem{prop}{Proposition}[section]
\newtheorem{theo}[prop]{Theorem}
\newtheorem{lemma}[prop]{Lemma}
\newtheorem{cor}[prop]{Corollary}
\newtheorem{rmk}[prop]{Remark}
\newtheorem{conj}[prop]{Conjecture}
\let\lra=\longrightarrow
\def\mapright\#1{\,\smash{\mathop{\lra}\limits^{\#1}}\,}
\begin{document}
\title{A class of fully nonlinear equations}
\author{Xiuxiong Chen, Weiyong He}

\maketitle

\begin{abstract}In this paper we consider a class of fully nonlinear equations which covers the equation introduced by S. Donaldson a decade ago and the equation introduced by Gursky-Streets recently. We solve the equation with uniform weak $C^2$ estimates, which hold for degenerate case. 
\end{abstract}

\section{Introduction}
We recall a class of differential operators introduced by S. Donaldson  \cite{Donaldson2007} and Gursky-Streets \cite{GS}.
Consider a function $u: \R\times \R^n\rightarrow \R$ with the coordinate $(t, x)$. We use the operator $D=(\p_t, \nabla)$ to denote the first order derivatives. Consider the matrix 
\[
r=\begin{pmatrix}
u_{tt} & \nabla u_t\\
(\nabla u_t)^t &R
\end{pmatrix}
\]
where $R=\nabla^2 u+\text{lower order terms}$. 
Given a symmetric matrix $P$, we use $\sigma_i(P)$ to denote the $i$-th elementary symmetric function on its eigenvalues $\_1, \cdots, \l_n$. The $\Gamma_k^+$ cone is 
\[
\Gamma_k^+=\{P: \sigma_i(P)>0, 1\leq i\leq k\}. 
\]
Assume $u_{tt}>0$ and $R\in \Gamma_k^+$, consider the operator
\begin{equation}
F_k(r)=u_{tt}\sigma_k(R)-(T_{k-1}(R), \nabla u_t\otimes \nabla u_t),
\end{equation}
where $T_{k-1}$ is the $(k-1)$-th Newton transformation which takes the form of
\[
T_{k-1}(R)_{ij}=\sigma_k(R)\frac{\p }{\p{R^{ij}}}\log \sigma_k(R). 
\]
This operator appears naturally in two different settings of geodesic equations of certain infinite dimensional Riemannian geometry. 

When $k=1$, the operator was introduced by S. Donaldson \cite{Donaldson2007}
\[
F_1(r)=u_{tt}(\Delta u+1)-|\nabla u_t|^2, 
\]
when he considered a Weil-Peterson type
metric on the space of volume forms (normalized) on a Riemannian
manifold $(X, g)$ with fixed total volume. This infinite
dimensional space can be parameterized by all smooth functions
such that
\[
\{\phi\in  C^\infty (X): 1 + \triangle_g \phi > 0\}.
\]
The  metric is defined by
\[
\|\delta \phi\|_\phi^2 =  \int_X\; (\delta \phi)^2 (1 +
\triangle_g \phi) dg.
\]
Then the geodesic equation is
\begin{equation}\label{E-1-1}
u_{tt}(1+\triangle u) -|\nabla u_t|^2_g = 0.
\end{equation}

For all $k\geq 1$, Gursky-Streets \cite{GS} introduced a family of operators $F_k$. 
Consider a conformal class $g_u=e^{-2u}g$ on  a Riemannian manifold $(M, g)$.  Recall the Schouten tensor
\[
A:=\frac{1}{n-2}\left(Ric-\frac{1}{2(n-1)} Rg\right),
\]
which plays an important role in conformal geometry. Under the conformal change, the Schouten tensor is given by
\[
A_u=A(g_u)=A+\nabla^2 u+\nabla u\otimes \nabla u-\frac{1}{2}|\nabla u|^2 g.
\]
When $A_u\in \Gamma_k^+$, Gursky-Streets introduced a family of fully nonlinear elliptic equations of the form
\[
u_{tt}\sigma_k(A_u)-(T_{k-1}(A_u), \nabla u_t\otimes \nabla u_t)=0.
\]
When $n=4$, $k=2$, 
this is the geodesic equation  of the following metric
\[
\langle \psi, \phi\rangle_{u}=\int_M \phi \psi \sigma_2(g_u^{-1}A_u) dV_u, 
\]
defined on the space $\cC^{+}=\{u: A_{g_u}\in \Gamma_2^{+}, g_u=e^{-2u}g\}$. Gursky and Streets  introduce these structures to solve the uniqueness of $\sigma_2$ Yamabe problem on a four Riemannian manifold. 
We refer the readers to \cite{GS, He17} for more details. 
When $k=1$, the Gursky-Streets equation reads
\[
u_{tt}(\Delta u-(n/2-1)|\nabla u|^2+A(x))-|\nabla u_t|^2=0.
\] 
The Donaldson equation and the Gursky-Streets equation are closely related in this case.
In this paper we discuss  a class of equations of the following form,  
\begin{equation}\label{GS1}
u_{tt}\left(\Delta u- b|\nabla u|^2+a(x)\right)-|\nabla u_t|^2=f,
\end{equation}
with boundary condition
\[u(\cdot, 0)=u_0, u(\cdot, 1)=u_1,\]
where $a(x): M\rightarrow \R$ is a positive smooth function and $b$ is a nonnegative constant. 
We define the function space
\[
\cH=\{\phi\in C^\infty(M), \Delta \phi-b |\nabla \phi|^2+a(x)>0\}
\]
and $u_0, u_1\in \cH$. Note that the sign $-b|\nabla u|^2$ makes the space $\cH$ convex, meaning that if $u_0, u_1\in \cH$, then $(1-t)u_0+tu_1\in \cH$ for any $t\in [0, 1]$.  \\

A main result of the paper is the following,
\begin{theo}\label{T-1-1}
Let $(M, g)$ be a compact Riemannian manifold and $f\in
C^{k}(M\times [0, 1])$ with $k\geq 2$ is a positive function. The
Dirichlet problem \eqref{GS1} has a unique  solution $u(x,
t)\in C^{k+1, \beta}(M\times [0, 1])$ for any  $\beta\in [0, 1)$.
The uniform $C^1$ estimates and estimates of  $u_{tt}, |u_{tk}|, \Delta u$ do not depend on $\inf f$, but on $(M, g)$, boundary datum $u_0, u_1$ and 
\[
\max\left\{\sup f, \sup |Df^{1/2}|, \sup {|f_{tt}|}, \sup {|\Delta f|}\right\}
\]
 for any $t\in [0, 1]$.
\end{theo}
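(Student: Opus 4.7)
The plan is to prove existence by the continuity method, with uniqueness and higher regularity following from standard linear theory once the a priori estimates are in hand. Writing $R_u := \Delta u - b|\nabla u|^2 + a(x)$, the equation becomes $F(u) := u_{tt} R_u - |\nabla u_t|^2 = f$. I would deform along the family $F(u) = s f + (1-s) F(v_0)$ for $s \in [0,1]$, with $v_0(x,t) := (1-t) u_0(x) + t u_1(x)$; since $R$ is concave in $u$ (because $-b|\nabla u|^2$ is concave and $\Delta u$ is linear), the convexity of $\cH$ gives $R_{v_0} > 0$ throughout, so $v_0$ is admissible and solves the $s = 0$ problem. Openness is the implicit function theorem: the linearization
\begin{equation*}
L\eta = R_u\,\eta_{tt} + u_{tt}(\Delta \eta - 2b\,\nabla u \cdot \nabla \eta) - 2\,\nabla u_t \cdot \nabla \eta_t
\end{equation*}
is uniformly elliptic on admissible $u$ and has trivial kernel on functions vanishing at $t = 0, 1$ by the maximum principle, hence is invertible on Hölder spaces with zero boundary data. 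Uniqueness for the theorem itself follows from the concavity of $F^{1/2}$ on the admissible cone $\{u_{tt} > 0,\, R_u > 0\}$: applied to two solutions with common boundary data, it forces the difference to satisfy a linear elliptic equation with zero boundary data, and the maximum principle gives vanishing.

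The technical core is closedness via a priori estimates whose dependence on $f$ is only through the quantities listed in the theorem, never $\inf f$. The $C^0$ bound comes from comparing $u$ with $v_0$ and a suitable parabolic-in-$t$ correction, exploiting the degenerate ellipticity of $F$. For $C^1$, $|\nabla u|$ is controlled by the maximum principle applied to an auxiliary quantity such as $|\nabla u|^2 e^{\phi(u)}$; the tangential boundary derivatives of $u_t$ are determined by $u_0, u_1$, while normal boundary values of $u_t$ are bounded by linear-in-$t$ barriers constructed from $v_0$ using $R_{v_0} > 0$, and the interior maximum of $u_t$ is handled analogously. None of these steps involves $\inf f$.

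The substantive obstacle, and the heart of the paper, is the uniform weak $C^2$ estimate: one must bound $u_{tt}$, $|\nabla u_t|$ and $\Delta u$ in terms of $(M,g)$, the boundary data, and only $\sup f$, $\sup |Df^{1/2}|$, $\sup|f_{tt}|$ and $\sup|\Delta f|$. The natural strategy is to differentiate the equation twice in $t$ and once or twice in the spatial directions, then apply $L$ to carefully designed test quantities such as
\begin{equation*}
Q_1 = \log u_{tt} + \lambda|\nabla u|^2, \qquad Q_2 = \log(\Delta u + K) + \mu |\nabla u|^2,
\end{equation*}
and analyze their interior maxima. The delicate point is that differentiating $f$ produces terms like $|\nabla f|^2/(f R_u)$ that naively demand $\inf f > 0$; these are rewritten via $|\nabla f|^2/f = 4|\nabla f^{1/2}|^2$, which is the mechanism by which $\sup|Df^{1/2}|$ replaces $\inf f$ in the hypothesis. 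Boundary estimates for $u_{tt}$ and for the mixed derivatives $u_{tk}$ are handled by Donaldson-type barriers adapted to this operator, again without invoking $\inf f$.

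Once the weak $C^2$ bound is secured, the hypothesis $f > 0$ combined with the obtained upper bounds on $u_{tt}$ and $R_u$ forces both to be bounded below by $\inf f$ divided by the relevant upper bound, so the equation is uniformly elliptic; Evans-Krylov then produces $C^{2,\beta}$ interior regularity and the standard Schauder bootstrap yields the claimed $C^{k+1,\beta}$ regularity. The essential obstacle throughout is obtaining the $C^2$ estimate with the prescribed dependence on $f$, which is precisely what allows the existence theory to extend uniformly as $f$ is allowed to touch zero.
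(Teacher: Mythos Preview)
Your overall architecture matches the paper's: continuity method, linearization for openness, concavity of the operator for uniqueness and for the Evans--Krylov step, and maximum-principle arguments on auxiliary quantities for the a priori estimates with the $|Df^{1/2}|$ mechanism replacing any dependence on $\inf f$.

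There is, however, one concrete error in the continuity set-up. Your proposed starting point $v_0(x,t)=(1-t)u_0+tu_1$ has $(v_0)_{tt}\equiv 0$, so
\[
F(v_0)=0\cdot R_{v_0}-|\nabla(u_1-u_0)|^2=-|\nabla u_1-\nabla u_0|^2\le 0,
\]
which is not positive (and not even admissible in the strict cone $\{u_{tt}>0\}$). Thus $v_0$ does \emph{not} solve the $s=0$ problem for a positive right-hand side, and the continuity path as you wrote it does not start. The paper repairs this by using
\[
U_{-c}(x,t)=-ct(1-t)+(1-t)u_0+tu_1,
\]
for $c$ large: then $(U_{-c})_{tt}=2c>0$ and $Q(U_{-c})=2c\,B_{U_{-c}}-|\nabla u_0-\nabla u_1|^2>0$ since $B_{U_{-c}}\ge(1-t)B_{u_0}+tB_{u_1}>0$. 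With this modification your outline goes through.

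A smaller remark on the $C^2$ step: the paper's choices of test functions are simpler than yours and worth noting. For $u_{tt}$ it uses $u_{tt}-u$ (not $\log u_{tt}$), exploiting $dQ(u)=2f-(a+b|\nabla u|^2)u_{tt}$; for $\Delta u$ it uses $\Delta u+\lambda t^2$, because $dQ(t^2)=2B_u$ directly absorbs the bad Ricci term $-2\,\mathrm{Ric}(\nabla u_t,\nabla u_t)$ via $|\nabla u_t|^2\le u_{tt}B_u\le C\,B_u$. Your proposed corrections $\lambda|\nabla u|^2$ and $\mu|\nabla u|^2$ would require additional work to produce a $B_u$ on the right-hand side, so while your strategy is in the right spirit, the specific auxiliary functions in the paper are what make the argument close cleanly without $\inf f$.
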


\begin{rmk}This generalizes the results in \cite{Chen-He}, where the authors solved the Donaldson equation with righthand side $\epsilon$. Here we consider a class of equations which also covers the Gursky-Streets equation when $k=1$.  Our computations are much more streamlined and simplified. 
\end{rmk}

As a direct corollary, we solve the homogeneous equation with the weak $C^2$ bound.

\begin{cor}\label{C1}Let $(M, g)$ be a compact Riemannian manifold. Then there exists a solution to the Dirichlet problem of the  homogeneous equation
\[
u_{tt}(\Delta u-b|\nabla u|^2+a(x))-|\nabla u_t|^2=0
\]
such that $u(0, \cdot)=u_0$ and $u(1, \cdot)=u_1$ with the uniform bound,
\[
|u|_{C^1}+|u_{tt}|+|\Delta u|+|\nabla u_t|\leq C. 
\]
\end{cor}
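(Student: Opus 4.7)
The plan is to obtain the solution of the homogeneous equation as a limit of the non-degenerate solutions produced by Theorem \ref{T-1-1}. For each $\epsilon\in (0,1]$ take the constant right-hand side $f_\epsilon\equiv \epsilon$. Then $\sup f_\epsilon\leq 1$ while $Df_\epsilon^{1/2}$, $f_{\epsilon,tt}$ and $\Delta f_\epsilon$ all vanish identically, so the four quantities controlling the estimates of Theorem \ref{T-1-1} are bounded uniformly in $\epsilon$. Applying that theorem yields smooth solutions $u_\epsilon\in C^{k+1,\beta}(M\times[0,1])$ of \eqref{GS1} with boundary values $u_0$ and $u_1$, satisfying
\[
|u_\epsilon|_{C^1}+|u_{\epsilon,tt}|+|\Delta u_\epsilon|+|\nabla u_{\epsilon,t}|\leq C
\]
with $C$ independent of $\epsilon$, precisely because Theorem \ref{T-1-1} asserts that these estimates do not depend on $\inf f$.

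The next step is to pass to the limit $\epsilon\to 0$. The $C^1$ bound together with the uniform Lipschitz control on $u_{\epsilon,t}$ in both $t$ and $x$ (via the $u_{\epsilon,tt}$ and $\nabla u_{\epsilon,t}$ bounds) gives equicontinuity of $u_\epsilon$ and of $u_{\epsilon,t}$; the $L^\infty$ bound on $\Delta u_\epsilon$ combined with standard elliptic $L^p$ theory yields a uniform $W^{2,p}$ bound on each time slice for every $p<\infty$. By Arzel\`a--Ascoli together with weak-$\ast$ compactness in $L^\infty$, a subsequence $u_{\epsilon_j}$ converges in $C^{1,\alpha}(M\times[0,1])$ to a limit $u$, while $u_{\epsilon_j,tt}$, $\Delta u_{\epsilon_j}$ and $\nabla u_{\epsilon_j,t}$ converge weakly-$\ast$ in $L^\infty$ to the corresponding derivatives of $u$. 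Lower semicontinuity of the $L^\infty$ norm transfers the stated bounds to $u$.

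The remaining task, which is also where the main technical obstacle lies, is to verify that $u$ satisfies the homogeneous equation. The strong $C^{1,\alpha}$ convergence yields uniform convergence $|\nabla u_{\epsilon_j,t}|^2\to|\nabla u_t|^2$ and $|\nabla u_{\epsilon_j}|^2\to |\nabla u|^2$, so the only genuinely problematic term is the product $u_{\epsilon,tt}(\Delta u_\epsilon-b|\nabla u_\epsilon|^2+a)$, in which both factors are only weakly-$\ast$ convergent in $L^\infty$. To handle this, the plan is to extract a further subsequence with $\Delta u_{\epsilon_j}\to \Delta u$ pointwise almost everywhere (enabled by the uniform spatial $W^{2,p}$ bound), and then to use the identity
\[
u_{\epsilon,tt}\bigl(\Delta u_\epsilon-b|\nabla u_\epsilon|^2+a\bigr)=|\nabla u_{\epsilon,t}|^2+\epsilon
\]
on the set $\{\Delta u-b|\nabla u|^2+a>0\}$, which has full measure by the convexity of $\cH$, to conclude pointwise a.e.\ convergence of $u_{\epsilon_j,tt}$. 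Dominated convergence then gives the homogeneous equation almost everywhere, completing the proof of Corollary \ref{C1}.
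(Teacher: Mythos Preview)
Your overall strategy---solve $Q(u_\epsilon)=\epsilon$ via Theorem~\ref{T-1-1}, use that the weak $C^2$ estimates there are independent of $\inf f$, and let $\epsilon\to 0$---is exactly the paper's argument, which is dispatched in a single sentence with no details on the limit passage. Your attempt to supply those details, however, contains two genuine gaps.

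First, $C^{1,\alpha}$ convergence of $u_\epsilon$ on $M\times[0,1]$ controls only the \emph{first} derivatives $u_{\epsilon,t}$ and $\nabla u_\epsilon$; the mixed term $\nabla u_{\epsilon,t}$ is a second-order derivative, so you cannot deduce uniform (or even pointwise a.e.) convergence of $|\nabla u_{\epsilon,t}|^2$ from $C^{1,\alpha}$ convergence. The only control you have on $\nabla u_{\epsilon,t}$ is the $L^\infty$ bound, giving weak-$\ast$ convergence but not convergence of the square. Second, the assertion that $\{B_u>0\}$ has full measure ``by the convexity of $\cH$'' is not valid: convexity of $\cH$ refers to the linear interpolation $(1-t)u_0+tu_1$, not to the solution $u$; from $B_{u_\epsilon}>0$ you obtain only $B_u\ge 0$ a.e., and on the set $\{B_u=0\}$ your bootstrap for pointwise convergence of $u_{\epsilon,tt}$ breaks down. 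Since the paper itself never specifies in what sense the limit $u$ satisfies $Q(u)=0$, these are gaps in an argument the paper leaves implicit rather than a divergence in method; a cleaner way to close them is to pass to the limit in the viscosity sense via the concavity of $\log Q$, or simply to record that the limit inherits the stated $L^\infty$ bounds by weak-$\ast$ lower semicontinuity without insisting on pointwise a.e.\ validity of the equation.
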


{\bf Acknowledgement}: The first author is supported in part by an NSF fund. The second author is supported partly by an NSF fund, award no. 1611797.
\section{Solve the equation}
For simplicity, we write
\[
B_u=\Delta u-b |\nabla u|^2+a(x).
\]
Its linearized operator is given by
\[
L_{B_u} (h)=\Delta h-2b (\nabla u, \nabla h)
\]
We write the equation
\begin{equation}\label{E-1-4}
Q{(u_{tt}, B_u, \nabla u_t)}:=u_{tt}B_u-|\nabla u_t|^2=f,
\end{equation}
where $f\in C^{\infty}(M\times [0, 1])$ is a positive function and
$u_0, u_1\in \cH$.  When there is no confusion, we also write 
\[
Q(u)=Q{(u_{tt}, B_u, \nabla u_t)}
\]
We compute the linearized operator, which is given by
\[
\begin{split}
dQ(h)=&u_{tt}[\triangle h-2b(\nabla u, \nabla h)]+B_u h_{tt}-2\langle
\nabla h_t, \nabla u_t\rangle\\
=& u_{tt}L_{B_u}(h)+B_u h_{tt}-2\langle
\nabla h_t, \nabla u_t\rangle.
\end{split}
\]
We will use the following notations. At any point $p\in M\times
[0, 1]$, take local coordinates $( x_1, \cdots, x_n, t)$. We can
always diagonalize the metric tensor $g$ as
$g_{ij}(p)=\delta_{ij}, \p_kg_{ij}(p)=0$. We will use, for any
smooth function $f$ on $X\times [0, 1]$, the following notations
\[
\triangle f_i=\triangle (f_i), ~~\triangle f_{ij}=\triangle
(f_{ij}), ~~\triangle f_{,i}=(\triangle f)_{, i}~~\mbox{and}~~
\triangle f,_{ij}=(\triangle f)_{ij}.
\]
For any function $f,$ $f_i, f_{ij}$ etc are covariant derivatives.
By Weitzenbock formula, we have
\begin{equation}\label{E-2-2}
\triangle f_i=\triangle f,_{i}+R_{ij}f_j,
\end{equation}
where $R_{ij}$ is the Ricci tensor of the metric $g$.\\

The following concavity is important for solving the equation. 

\begin{lemma}[Donaldson \cite{Donaldson2007}] \label{L-2-1}1. If $A>0$, then
$Q(A)>0$ and if $A\geq 0$, $Q(A)\geq 0$.\\

2. If $A, B$ are two matrices with $Q(A)=Q(B)>0,$ and if the
entries $A_{00}, B_{00}$ are positive then for any $s\in [0, 1]$,
\[
Q(sA+(1-s)B)\geq Q(A), Q(A-B)\leq 0.
\]
Moreover, strict inequality holds if the corresponding arguments
are not the same.
\end{lemma}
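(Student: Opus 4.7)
The plan is to recognize $Q$ as a Lorentzian quadratic form of signature $(1, n+1)$ on its three slots, and to deduce both assertions from the reverse Cauchy--Schwarz inequality that holds on its forward timelike cone.

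Write an admissible argument of $Q$ as a triple $A = (A_{00}, A_{\mathrm{sc}}, \alpha_A)$ with $A_{00}, A_{\mathrm{sc}} \in \R$ and $\alpha_A \in \R^n$, so that $Q(A) = A_{00}A_{\mathrm{sc}} - |\alpha_A|^2$, and polarize to obtain the symmetric bilinear form
\[
\langle A, B\rangle_Q := \tfrac12(A_{00}B_{\mathrm{sc}} + B_{00}A_{\mathrm{sc}}) - \langle\alpha_A, \alpha_B\rangle.
\]
The change of variables $u = \tfrac12(A_{00}+A_{\mathrm{sc}})$, $w = \tfrac12(A_{00}-A_{\mathrm{sc}})$ exhibits $Q$ as $u^2 - w^2 - |\alpha_A|^2$, so $\{Q > 0,\ A_{00} > 0\}$ is literally the forward timelike cone in a Minkowski space. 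Part 1 then reduces to a Schur-complement computation: positive definiteness of the underlying $(1+n)\times(1+n)$ matrix forces $A_{00} > 0$ and the $n \times n$ Schur complement to be positive definite, whence its trace is positive and $Q(A) > 0$; the semi-definite case is identical with non-strict inequalities.

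The heart of Part 2 is the reverse Cauchy--Schwarz inequality $\langle A, B\rangle_Q \geq \sqrt{Q(A)Q(B)}$ for $A, B$ in the forward cone, with equality iff $A$ and $B$ are positively proportional. I would prove it either by passing to a Lorentz frame adapted to $B$ (so that $B$ has only a time-like component) and reducing to the ordinary Cauchy--Schwarz on the spatial part, or by analyzing the discriminant of $s \mapsto Q(A - sB)$ together with the sign condition on the $00$-entries. Granting it, expanding the bilinear form gives
\[
Q(sA + (1-s)B) \geq \bigl(s\sqrt{Q(A)} + (1-s)\sqrt{Q(B)}\bigr)^2, \qquad Q(A-B) \leq \bigl(\sqrt{Q(A)} - \sqrt{Q(B)}\bigr)^2,
\]
which, under the hypothesis $Q(A) = Q(B)$, specialize to $Q(sA + (1-s)B) \geq Q(A)$ and $Q(A-B) \leq 0$. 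Strict inequalities propagate directly from strictness in reverse Cauchy--Schwarz, i.e.\ whenever $A$ and $B$ fail to be positively proportional. The only genuine obstacle is establishing the reverse Cauchy--Schwarz; everything else is bookkeeping.
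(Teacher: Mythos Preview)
Your argument is correct. The paper does not actually prove this lemma; it is attributed to Donaldson and simply stated, with the immediately following Lemma~\ref{L-2-2} recording the ``equivalent form'' that $\log(xy-\sum z_i^2)$ is concave on the set $\{x>0,\ y>0,\ xy-\sum z_i^2>0\}$. So there is no in-paper proof to compare against line by line.

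Your route via the Lorentzian bilinear form and the reverse Cauchy--Schwarz inequality is the natural direct argument, and it is slightly sharper than merely invoking concavity of $\log Q$: concavity of $\log Q$ immediately yields $Q(sA+(1-s)B)\geq Q(A)$ when $Q(A)=Q(B)$, but the second assertion $Q(A-B)\leq 0$ does not drop out of concavity alone---it genuinely uses the quadratic (bilinear) structure, which your polarization makes explicit. Your treatment of Part~1 via the Schur complement is the right way to unpack the matrix hypothesis $A>0$: positivity of the $(1{+}n)\times(1{+}n)$ block matrix forces the Schur complement $R-A_{00}^{-1}\alpha_A\alpha_A^{t}$ to be positive definite, and taking its trace gives exactly $A_{00}A_{\mathrm{sc}}-|\alpha_A|^2>0$. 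Finally, your strictness analysis is correct: under the standing hypothesis $Q(A)=Q(B)>0$, positive proportionality of $A$ and $B$ forces $A=B$, so the equality case in reverse Cauchy--Schwarz matches the lemma's ``corresponding arguments are not the same.''
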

We have its equivalent form.
\begin{lemma}[\cite{Chen-He}] \label{L-2-2} Consider the function
\[
f(x, y, z_1, \cdots, z_n)=\log{\left(xy-\sum z_i^2\right)}.
\]
Then $f$ is concave when $x>0, y>0, xy-\sum z_i^2>0$.
\end{lemma}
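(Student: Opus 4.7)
My plan is to prove concavity of $f$ by a simple factorization that reduces the problem to the standard fact that the ``quadratic-over-linear'' function is convex.

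On the domain $\Omega = \{x > 0,\ y > 0,\ xy > \sum z_i^2\}$ we have $y > \sum z_i^2 / x$, so the factorization
\[
xy - \sum z_i^2 \;=\; x \Bigl(y - \tfrac{\sum z_i^2}{x}\Bigr)
\]
is valid and gives
\[
f(x, y, z) \;=\; \log x + \log \Bigl(y - \tfrac{\sum z_i^2}{x}\Bigr)
\]
on $\Omega$. Since a sum of concave functions is concave, it suffices to show each summand is concave. The first is $\log x$, which is obviously concave on $\{x > 0\}$.

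For the second summand, the plan is to use the standard fact that $h(x, z) := \sum z_i^2 / x$ is convex on $\{x > 0\}$: given this, $y - h(x, z)$ is concave (linear in $y$, minus convex in $(x, z)$) and positive on $\Omega$, and composing with the concave nondecreasing function $\log$ preserves concavity. Convexity of $h$ reduces to a short Hessian computation: one finds that for any vector $v = (a, c_1, \ldots, c_n)$,
\[
v^t (\nabla^2 h)\, v \;=\; \frac{2}{x^3}\, |a z - x c|^2 \;\geq\; 0,
\]
which establishes the claim.

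I don't expect any serious obstacle here. The only mildly clever ingredient is the factorization, which shifts the issue from concavity on a Lorentzian-type cone to convexity of the quadratic-over-linear function on $\{x > 0\}$. An alternative, more structural route would be to observe that $xy - \sum z_i^2$ is a Lorentzian quadratic form on $\R^{n+2}$ and to invoke the reverse Cauchy--Schwarz inequality on its forward timelike cone; but the factorization approach above is shorter and sidesteps any case analysis.
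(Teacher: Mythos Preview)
Your argument is correct. The factorization $xy-\sum z_i^2 = x\bigl(y-\sum z_i^2/x\bigr)$ cleanly reduces the claim to the convexity of the perspective function $h(x,z)=\sum z_i^2/x$ on $\{x>0\}$, and your Hessian computation $v^t(\nabla^2 h)v = \tfrac{2}{x^3}\,|az-xc|^2$ is accurate; the composition with the concave nondecreasing $\log$ then finishes it.

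As for comparison: the paper does not actually prove this lemma. It is stated as the ``equivalent form'' of Donaldson's Lemma~\ref{L-2-1} and attributed to \cite{Chen-He}, with no argument reproduced here. So there is no in-paper proof to weigh your approach against; your write-up supplies a short self-contained proof where the paper simply cites one. The alternative you mention---viewing $xy-\sum z_i^2$ as a Lorentzian form and using reverse Cauchy--Schwarz on the forward cone---is closer in spirit to how Lemma~\ref{L-2-1} is typically argued, but your factorization route is more elementary and avoids any appeal to the geometry of the light cone.
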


First we assume $u$ solves the Dirichlet problem \eqref{GS1} and derive the \emph{a priori} estimates. With these estimates, it is standard to use the method of continuity to solve the equation. 
\subsection{$C^{0}$ estimates and uniqueness}
Denote $U_c=ct(1-t)+(1-t)u_0+tu_1$ for any number $c$.
\begin{lemma}\label{L-2-3}
For some $c>0$ big enough,
\[
U_{-c}\leq u\leq (1-t)u_0+t u_1.
\]
Moreover, the solution $u$ is unique. 
\end{lemma}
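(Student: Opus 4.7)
The plan is to prove the two-sided bound by maximum principle arguments and then derive uniqueness from the ellipticity of the linearization of $Q$. Two observations are central. First, any solution of \eqref{GS1} satisfies $u_{tt}>0$ pointwise, because $u_{tt}B_u = f + |\nabla u_t|^2 > 0$ and $B_u>0$ (since $u(\cdot,t)\in\cH$). Second, the map $\phi\mapsto B_\phi$ is concave in $\phi$ since $\phi\mapsto|\nabla\phi|^2$ is convex; in particular, writing $v:=(1-t)u_0+tu_1$, one has $B_v\geq(1-t)B_{u_0}+tB_{u_1}\geq m>0$ uniformly on $M\times[0,1]$, where $m:=\min\bigl(\inf_M B_{u_0},\inf_M B_{u_1}\bigr)$.

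\emph{Upper bound.} Let $w:=u-v$, which vanishes on $\{t=0,1\}$. A positive interior maximum of $w$ would force $w_{tt}\leq 0$, hence $u_{tt}\leq v_{tt}=0$, contradicting $u_{tt}>0$. Thus $u\leq v$.

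\emph{Lower bound.} Compute $(U_{-c})_{tt}=2c$, $\nabla(U_{-c})_t=\nabla u_1-\nabla u_0$, and $B_{U_{-c}}=B_v\geq m$, since $U_{-c}-v$ depends only on $t$. Fix $c$ so large that
\[
Q(U_{-c})=2c\,B_{U_{-c}}-|\nabla u_1-\nabla u_0|^2 \ \geq\ 2cm-\sup_M|\nabla u_1-\nabla u_0|^2 \ >\ \sup f
\]
pointwise. Along the segment $u_s:=(1-s)U_{-c}+su$ the quantities $(u_s)_{tt}$ and $B_{u_s}$ stay strictly positive (the latter by concavity of $B$), and the concavity of $\log Q$ from Lemma~\ref{L-2-2} gives $\log Q(u_s)\geq(1-s)\log Q(U_{-c})+s\log f$, so $Q(u_s)>0$ uniformly; therefore each $dQ(u_s)$ is elliptic. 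The averaged operator $\tilde L:=\int_0^1 dQ(u_s)\,ds$ is then linear, uniformly elliptic, and has no zeroth-order term. For $w:=U_{-c}-u$ the fundamental theorem of calculus yields
\[
\tilde L(w)=Q(U_{-c})-Q(u)\geq \sup f - f \geq 0,
\]
and since $w=0$ on $\{t=0,1\}$, the weak maximum principle gives $w\leq 0$, i.e., $U_{-c}\leq u$.

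\emph{Uniqueness.} For two solutions $u,v$, the same scheme applied to $u_s:=(1-s)v+su$ gives $(u_s)_{tt},B_{u_s}>0$ and $Q(u_s)\geq f>0$ along the segment by the same concavity arguments, so $\tilde L:=\int_0^1 dQ(u_s)\,ds$ is linear elliptic with no zeroth-order term. The identity $\tilde L(u-v)=Q(u)-Q(v)=0$ together with $u-v=0$ on $\{t=0,1\}$ forces $u\equiv v$ by the maximum principle. The main technical point throughout is ensuring that the linearization stays uniformly elliptic along the interpolation; this is exactly where the concavity of $\log Q$ in Lemma~\ref{L-2-2} does the heavy lifting, since ellipticity of $dQ(u_s)$ is equivalent to $(u_s)_{tt}, B_{u_s}>0$ and $Q(u_s)>0$.
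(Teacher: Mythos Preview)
Your argument is correct. The upper bound via convexity in $t$ is identical to the paper's. For the lower bound and uniqueness, however, you take a genuinely different route. The paper argues pointwise: at a hypothetical interior minimum of $v=u-U_{-c}$ it invokes the tangent--line form of concavity, $Q^{-1}dQ(v)\leq \log Q(u)-\log Q(U_{-c})$, makes the right side negative by choosing $c$ large, and then computes directly from $D^2v\geq 0$, $\nabla v=0$ that $dQ(v)>0$ at that point, a contradiction. You instead integrate the linearization along the segment $u_s=(1-s)U_{-c}+su$ to produce a single linear elliptic operator $\tilde L$ with no zeroth-order term and conclude via the global weak maximum principle. What your approach buys is a cleaner packaging (one inequality $\tilde L(w)\geq 0$ plus boundary data); what it costs is the need to verify ellipticity along the entire segment rather than at a single point. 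One small imprecision: Lemma~\ref{L-2-2} gives concavity of $\log Q$ in the variables $(x,y,z)=(u_{tt},B_u,\nabla u_t)$, and $B_{u_s}$ is not the linear interpolation of $B_{U_{-c}}$ and $B_u$. Your inequality $\log Q(u_s)\geq (1-s)\log Q(U_{-c})+s\log f$ does hold, but the missing half-line is that $B_{u_s}\geq (1-s)B_{U_{-c}}+sB_u$ together with monotonicity of $Q$ in its second slot (when the first slot is positive) reduces you to the linear-interpolation case where Lemma~\ref{L-2-2} applies directly.
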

\begin{proof}First we have
\[u_{tt}>0.
\]
It follows that
\[
\frac{u(\cdot, t)-u(\cdot, 0)}{t-0}< \frac{u(\cdot,
1)-u(\cdot, t)}{1-t}.
\]
Namely
\[
u(t)<(1-t)u_0+tu_1.
\]
Note that $u=U_{-c}$ on the boundary. If $u<U_{-c}$ for
some point, then $v=u-U_{-c}$ obtains its minimum in the
interior, say at $p$. Then $, \nabla v=0, D^{2}v\geq 0$ at $p$. By the concavity of $\log Q$, we have
\begin{equation}\label{e0}
Q^{-1}dQ (v)\leq \log Q(u)-\log Q(U_{-c}),
\end{equation}
where $Q^{-1}dQ$ takes value at $u$. Clearly $Q(U_{-c})=2c B_{U_{-c}}-|\nabla u_0-\nabla u_1|^2$. 
Note that $B_{U_{-c}}\geq (1-t)B_{u_0}+tB_{u_1}$ is strictly positive. 
If we choose $c$ sufficiently large, the righthand side of \eqref{e0} is negative. However at $p$, $\nabla v=0, D^2 v\geq 0$, we claim
$dQ(v)\geq 0. $ Contradiction.
 To see the claim, we choose a vector $(x_0, Y)=(x_0, y_1, \cdots y_n)$, then by $D^2v (p)\geq 0$ we have, 
\[
v_{tt} x_0^2-2x_0 (\nabla v_t, Y)+Y \nabla^2 v Y^t\geq 0
\]
Choose $x_0=B_u, Y=\nabla u_t$ and note $Y \nabla^2 v Y^t\leq \Delta v|\nabla u_t|$. It follows 
\[
2(\nabla u_t, \nabla v_t)\leq v_{tt}B_u+B_{u}^{-1}\Delta v |\nabla u_t|^2 
\]
We compute $dQ(v)=v_{tt}B_u+u_{tt}(\Delta v+a(x))-2(\nabla u_t, \nabla v_t)\geq a(x)u_{tt}>0.$  
The same argument gives the uniqueness.
\end{proof}

\subsection{$C^1$ estimates}  
\begin{prop}
We have the following,
\[-c+u_1-u_0\leq  u_{t}(0, \cdot)\leq u_1-u_0\leq  u_t(1, \cdot)\leq u_1-u_0+c.\]
\end{prop}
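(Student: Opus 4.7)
The plan is to obtain all four inequalities directly from the convexity of $u$ in the $t$-variable (which is immediate from $u_{tt}>0$) combined with the barrier comparison already established in Lemma~\ref{L-2-3}. No new nonlinear machinery is needed at this stage; everything follows from differentiating known pointwise inequalities in $t$ at the boundary.

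First I would dispose of the two middle inequalities. For each fixed $x\in M$, the function $t\mapsto u(x,t)$ is strictly convex since $u_{tt}>0$, so the difference quotient $[u(x,t)-u(x,0)]/t$ is non-decreasing in $t\in(0,1]$. Letting $t\downarrow 0$ and $t\uparrow 1$ respectively yields
\[
u_t(x,0)\;\le\;\frac{u(x,1)-u(x,0)}{1}\;=\;u_1(x)-u_0(x)\;\le\;u_t(x,1).
\]

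For the outer inequalities I would use $U_{-c}$ as a barrier. By Lemma~\ref{L-2-3} the function $v=u-U_{-c}$ is non-negative on $M\times[0,1]$ and vanishes identically on the boundary slices $\{t=0\}$ and $\{t=1\}$. Hence, for each fixed $x$, $v(x,\cdot)\ge 0$ on $[0,1]$ with $v(x,0)=v(x,1)=0$, which forces $v_t(x,0)\ge 0$ and $v_t(x,1)\le 0$. A direct computation gives $(U_{-c})_t = -c(1-2t)+u_1-u_0$, so $(U_{-c})_t(x,0)=-c+u_1-u_0$ and $(U_{-c})_t(x,1)=c+u_1-u_0$. Therefore
\[
u_t(x,0)\;\ge\;-c+u_1-u_0,\qquad u_t(x,1)\;\le\; c+u_1-u_0,
\]
which together with the middle inequalities from the previous step gives the full chain in the proposition.

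There is no real obstacle in this argument: the only place one needs to be careful is in checking that Hopf/boundary-derivative-style comparisons are valid for the smooth solution $u$, but since both $u$ and $U_{-c}$ are $C^2$ up to $t=0$ and $t=1$ and their difference is a non-negative function with zero boundary values on those hypersurfaces, the one-sided $t$-derivative signs follow from elementary calculus rather than any PDE regularity tool.
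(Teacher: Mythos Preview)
Your proof is correct and follows essentially the same approach as the paper: both use the convexity $u_{tt}>0$ together with the barrier bound $U_{-c}\le u\le (1-t)u_0+tu_1$ from Lemma~\ref{L-2-3}, differentiated at the boundary, to obtain the chain of inequalities. The paper's own proof is much terser (it simply notes $u_t$ attains its extrema on the boundary and says ``it is then easy to verify''), so your version is a fully worked-out expansion of the same argument; the only minor imprecision is that the single difference quotient $[u(x,t)-u(x,0)]/t$ does not by itself yield $u_1-u_0\le u_t(x,1)$ when $t\uparrow 1$, but the intended convexity fact (secant slope lies between endpoint derivatives) is standard and clear.
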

\begin{proof} By Lemma \ref{L-2-3},
\[
-ct(1-t)+(1-t)u_0+tu_1\leq  u\leq (1-t)u_0+tu_1.
\]
Since $u_{tt}>0$,  $u_t$ obtains its
maximum on the boundary. It is then easy to verify that the estimate holds. 
\end{proof}

\begin{rmk}
Since $ u+At+B$ still solves the equation for any constants $A, B$. The boundary data changes as , $u_0\rightarrow u_0+B$, $u_1\rightarrow u_1+A+B$ and $ u_t\rightarrow  u_t+A$.  (Note that $\nabla  u$ remains the same.)
Since we have uniform bound on $| u|_{C^0}$ and $|u_t|$, we can choose $A, B$ accordingly such that $1\leq | u_t|\leq C$, and $1\leq -u\leq C$. We assume this normalization in the following. 
\end{rmk}

We need some preparations. We have the following straightforward computations.

\begin{prop}We have
\[
dQ(t)=0, dQ(t^2)=2B_u. 
\]
\end{prop}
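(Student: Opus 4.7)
The plan is to just substitute $h=t$ and $h=t^2$ into the formula for the linearized operator derived earlier in the excerpt, namely
\[
dQ(h) = u_{tt} L_{B_u}(h) + B_u h_{tt} - 2\langle \nabla h_t, \nabla u_t\rangle.
\]
Since both test functions depend only on $t$, every spatial derivative appearing on the right-hand side vanishes, and the identities fall out immediately.

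First I would treat $h=t$. Here $h_{tt}=0$, $\nabla h = 0$, and $h_t = 1$ so $\nabla h_t = 0$; also $L_{B_u}(h) = \Delta h - 2b(\nabla u,\nabla h) = 0$. Plugging in gives $dQ(t)=0$.

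Next I would treat $h=t^2$. Now $h_{tt}=2$, while $\nabla h=0$ and $h_t = 2t$ still satisfies $\nabla h_t = 0$, so again $L_{B_u}(h)=0$ and the last term drops. What remains is $dQ(t^2) = B_u \cdot 2 = 2B_u$.

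There is no real obstacle here; the only thing worth flagging is that the proposition is recorded precisely because these two identities will be used repeatedly when testing concavity inequalities of the form \eqref{e0} against linear-in-$t$ and quadratic-in-$t$ perturbations (as was already done in Lemma~\ref{L-2-3} with $U_c$), so the statement is invoked more as a bookkeeping device than as a substantive lemma.
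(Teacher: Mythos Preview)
Your proposal is correct and is exactly the intended computation; the paper in fact states this proposition without proof, since it follows immediately from the formula for $dQ(h)$ by substituting $h=t$ and $h=t^2$ as you do.
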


\begin{prop}We have
\[dQ(u)=2f-(a+b|\nabla u|^2)u_{tt}\]
\end{prop}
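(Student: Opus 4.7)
The plan is to specialize the general linearization formula
\[
dQ(h) = u_{tt}L_{B_u}(h) + B_u h_{tt} - 2\langle \nabla h_t, \nabla u_t\rangle
\]
derived earlier in this section to $h = u$, and then collapse the result using the equation $Q(u) = f$. This is a purely algebraic identity; no analytic input is needed beyond the linearization computation that is already recorded.

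In more detail, I would treat the three terms separately. Since $L_{B_u}(u) = \Delta u - 2b(\nabla u, \nabla u) = \Delta u - 2b|\nabla u|^2$ while $B_u = \Delta u - b|\nabla u|^2 + a(x)$, the two differ by
\[
L_{B_u}(u) = B_u - \bigl(a(x) + b|\nabla u|^2\bigr),
\]
so the first term becomes $u_{tt}B_u - (a + b|\nabla u|^2)u_{tt}$. The second term is $B_u u_{tt}$, and the third is $-2\langle \nabla u_t, \nabla u_t\rangle = -2|\nabla u_t|^2$. Summing,
\[
dQ(u) = 2u_{tt}B_u - 2|\nabla u_t|^2 - (a + b|\nabla u|^2)u_{tt} = 2Q(u) - (a + b|\nabla u|^2)u_{tt},
\]
and invoking $Q(u) = f$ yields the stated identity.

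There is essentially no obstacle here: the proposition is a one-line consequence of the linearization formula together with the definition of $B_u$. Conceptually it reflects the near-homogeneity of $Q$ in the derivatives of $u$ — the quadratic core $u_{tt}\Delta u - |\nabla u_t|^2$ contributes the factor $2$ via Euler's relation, while the inhomogeneous pieces $a(x)$ and $-b|\nabla u|^2$ sitting inside $B_u$ produce the correction $-(a + b|\nabla u|^2)u_{tt}$. The only bookkeeping point worth emphasizing is the factor $2b$ (rather than $b$) appearing in $L_{B_u}$, which is precisely what makes the $b|\nabla u|^2$ correction land with the correct sign.
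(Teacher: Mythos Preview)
Your argument is correct and is essentially identical to the paper's proof: both specialize the linearization formula $dQ(h) = u_{tt}L_{B_u}(h) + B_u h_{tt} - 2\langle \nabla h_t, \nabla u_t\rangle$ at $h=u$, observe that $L_{B_u}(u) = \Delta u - 2b|\nabla u|^2$, and then invoke $Q(u)=f$. The only difference is that you spell out the intermediate step $L_{B_u}(u) = B_u - (a + b|\nabla u|^2)$ explicitly, which the paper leaves implicit.
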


\begin{proof}
We compute
\[dQ(u)=u_{tt}(\Delta u-2b|\nabla u|^2)+B_u u_{tt}-2|\nabla u_t|^2\]
Using the equation this completes the proof.
\end{proof}

\begin{prop}
Given $\phi, \psi$, we have
\begin{equation}\label{q0}
dQ(\phi\psi)=\psi dQ(\phi)+\phi dQ(\psi)+2q_u(D\phi, D\psi),
\end{equation}
where the quadratic form is given by
\[
q_u(D\phi, D\psi)=u_{tt}(\nabla \phi, \nabla \psi)+B_u(\phi_t, \psi_t)-(\nabla u_t, \phi_t\nabla \psi+\psi_t\nabla \phi)
\]
Note that $q_u(D\phi, D\phi)\geq 0$. 
\end{prop}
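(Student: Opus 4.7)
The plan is to unpack both sides directly from the formula for the linearization
\[
dQ(h)=u_{tt}\bl\Delta h-2b(\nabla u,\nabla h)\br+B_u h_{tt}-2(\nabla h_t,\nabla u_t),
\]
and verify the identity by matching terms. The identity is essentially a Leibniz-type rule, so the only thing to confirm is that the cross-terms that arise from the product rule in each ingredient of $dQ$ collect precisely into $2q_u(D\phi,D\psi)$.

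Concretely, I would compute each of the three pieces of $dQ(\phi\psi)$ in turn. The first piece $u_{tt}L_{B_u}(\phi\psi)$ splits, using $\Delta(\phi\psi)=\phi\Delta\psi+\psi\Delta\phi+2(\nabla\phi,\nabla\psi)$ and the first-order product rule for $\nabla(\phi\psi)$, into $\phi\, u_{tt}L_{B_u}(\psi)+\psi\, u_{tt}L_{B_u}(\phi)+2u_{tt}(\nabla\phi,\nabla\psi)$. The second piece expands via $(\phi\psi)_{tt}=\phi_{tt}\psi+2\phi_t\psi_t+\phi\psi_{tt}$ into $\psi B_u\phi_{tt}+\phi B_u\psi_{tt}+2B_u\phi_t\psi_t$. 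The third piece, after writing $\nabla(\phi\psi)_t=\phi\nabla\psi_t+\psi\nabla\phi_t+\phi_t\nabla\psi+\psi_t\nabla\phi$, produces $-2\phi(\nabla\psi_t,\nabla u_t)-2\psi(\nabla\phi_t,\nabla u_t)-2\phi_t(\nabla\psi,\nabla u_t)-2\psi_t(\nabla\phi,\nabla u_t)$. Grouping the terms proportional to $\phi$ and $\psi$ reconstructs $\phi\,dQ(\psi)$ and $\psi\,dQ(\phi)$, and the remaining cross-terms are exactly $2u_{tt}(\nabla\phi,\nabla\psi)+2B_u\phi_t\psi_t-2\phi_t(\nabla\psi,\nabla u_t)-2\psi_t(\nabla\phi,\nabla u_t)$, which is $2q_u(D\phi,D\psi)$ by definition. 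No step here is subtle; it is purely the product rule applied systematically.

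For the non-negativity statement $q_u(D\phi,D\phi)\ge 0$, I would observe that as a quadratic form in $(\phi_t,\nabla\phi)$, it has associated symmetric block matrix
\[
M=\begin{pmatrix} B_u & -(\nabla u_t)^{T}\\ -\nabla u_t & u_{tt}\, I\end{pmatrix}.
\]
Since $u_{tt}>0$ (and $B_u>0$) along the solution, the Schur complement criterion reduces positive semi-definiteness of $M$ to $B_u-u_{tt}^{-1}|\nabla u_t|^2\ge 0$, which is just $u_{tt}B_u-|\nabla u_t|^2=f\ge 0$. Equivalently, one can apply Cauchy--Schwarz to the cross-term $2\phi_t(\nabla\phi,\nabla u_t)\le 2|\phi_t||\nabla\phi||\nabla u_t|$ and then check the discriminant $|\nabla u_t|^2-u_{tt}B_u=-f\le 0$.

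The main obstacle is really only bookkeeping: making sure every product-rule term is accounted for exactly once and collected correctly, in particular the cross-terms from $(\phi\psi)_{tt}$, from $\Delta(\phi\psi)$, and from $\nabla(\phi\psi)_t$. Given the preceding identity $dQ(h)=u_{tt}L_{B_u}(h)+B_u h_{tt}-2(\nabla h_t,\nabla u_t)$, the computation is mechanical and no new idea is needed beyond the observation that $q_u$ is the quadratic form whose positivity is equivalent to $Q(u)\ge 0$.
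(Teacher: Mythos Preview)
Your proof is correct. The paper actually states this proposition without proof, treating it as a routine Leibniz-type computation; your expansion of $dQ(\phi\psi)$ term by term and the Schur-complement (equivalently Cauchy--Schwarz) verification of $q_u(D\phi,D\phi)\ge 0$ via $u_{tt}B_u-|\nabla u_t|^2=f\ge 0$ is exactly the intended calculation.
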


\begin{prop}We compute
\begin{equation}\label{q1}
dQ(|\nabla u|^2)=2u_{tt}(R_{ij}u_iu_j-a_iu_i)+2f_iu_i+2q_u(\nabla u_i, \nabla u_i)
\end{equation}
\end{prop}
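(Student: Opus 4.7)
\medskip

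\noindent\textbf{Proof plan.} The plan is to apply the product rule \eqref{q0} to $|\nabla u|^2 = \sum_i u_i u_i$, computed in local normal coordinates at a point so that $g_{ij} = \delta_{ij}$ and first Christoffel symbols vanish. Then the only genuine work is to compute $dQ(u_i)$ for each $i$, and this comes from differentiating the equation $Q(u)=f$ in the $x_i$ direction.

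\medskip

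\noindent\textbf{Step 1: Compute $dQ(u_i)$.} First I would take $\partial_i$ of $u_{tt} B_u - |\nabla u_t|^2 = f$. The three contributions are $u_{tti}B_u$, $u_{tt}\,\partial_i B_u$, and $-\partial_i|\nabla u_t|^2 = -2\langle \nabla (u_i)_t, \nabla u_t\rangle$. The delicate term is $\partial_i B_u$; writing $\partial_i\Delta u = \Delta u_i - R_{ij}u_j$ by the Weitzenbock formula \eqref{E-2-2}, and $\partial_i(-b|\nabla u|^2) = -2b u_j u_{ji}$, $\partial_i a(x) = a_i$, gives
\[
\partial_i B_u = \Delta u_i - 2b(\nabla u,\nabla u_i) - R_{ij}u_j + a_i = L_{B_u}(u_i) - R_{ij}u_j + a_i.
\]
Substituting and rearranging so that the left-hand side is exactly $u_{tt}L_{B_u}(u_i) + B_u (u_i)_{tt} - 2\langle \nabla (u_i)_t,\nabla u_t\rangle = dQ(u_i)$, I obtain
\[
dQ(u_i) = f_i + u_{tt}\bigl(R_{ij}u_j - a_i\bigr).
\]

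\medskip

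\noindent\textbf{Step 2: Apply the product rule and sum.} At the base point, using \eqref{q0} with $\phi=\psi=u_i$ (which is legitimate scalar-valued in our chosen coordinates), I get
\[
dQ(u_i^2) = 2 u_i\, dQ(u_i) + 2\, q_u(Du_i, Du_i).
\]
Summing over $i$ and plugging in Step 1 yields
\[
dQ(|\nabla u|^2) = 2\sum_i u_i\bigl(f_i + u_{tt}(R_{ij}u_j - a_i)\bigr) + 2\sum_i q_u(Du_i, Du_i),
\]
which is the claimed identity \eqref{q1}.

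\medskip

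\noindent\textbf{Main obstacle.} There is no real obstacle of principle here; the one thing to be careful about is covariant versus coordinate differentiation. The Weitzenbock correction $R_{ij}u_j$ is exactly what produces the Ricci term on the right-hand side, so one must make sure it is accounted for when writing $\partial_i(\Delta u)$ as $\Delta u_i$. After that the calculation is purely algebraic, and the product-rule identity \eqref{q0} already packaged away the cross terms into the nonnegative quadratic form $q_u$.
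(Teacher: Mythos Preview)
Your proof is correct and follows essentially the same route as the paper: first compute $dQ(u_i)=u_{tt}(R_{ij}u_j-a_i)+f_i$ by differentiating the equation $Q(u)=f$ and using the Weitzenb\"ock commutation \eqref{E-2-2}, then apply the product rule \eqref{q0} with $\phi=\psi=u_i$ and sum over $i$. The only cosmetic difference is that the paper writes the quadratic term as $q_u(\nabla u_i,\nabla u_i)$ while you write $q_u(Du_i,Du_i)$, but given the definition of $q_u$ these denote the same thing.
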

\begin{proof}We compute,
\[dQ(u_i)= u_{tt}(\Delta u_i-2b (\nabla u, \nabla u_i))+B_uu_{tti}-2u_{tk}u_{tki}\]
Taking derivative of the equation, we get
\[
u_{tt}((\Delta u)_i-2b (\nabla u, \nabla u_i)+a_i)+B_uu_{tti}-2u_{tk}u_{tki}=f_i. 
\]
It follows that
\begin{equation}\label{g0}
dQ(u_i)=u_{tt}(R_{ij}u_j-a_i)+f_i.
\end{equation}
Applying \eqref{q0} to $\phi=u_i$, we get  \eqref{q1}. 
\end{proof}

\begin{lemma}There exists a uniform constant $C_2=C_2(g, |u_0|_{C^1}, |u_1|_{C^1}, \sup f, |\nabla f^{1/2}|)$ such that
\[
|\nabla u|\leq C_2. 
\]
\end{lemma}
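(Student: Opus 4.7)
The plan is to use the maximum principle applied to an auxiliary function built from $|\nabla u|^2$. On the parabolic boundary $M\times\{0,1\}$, $\nabla u$ equals $\nabla u_0$ or $\nabla u_1$ and is already bounded by $|u_0|_{C^1},|u_1|_{C^1}$, so it suffices to control $|\nabla u|^2$ at an interior maximum of a well-chosen auxiliary $G$. At any such maximum $p$ one has $\nabla G(p)=0$ and $D^2 G(p)\le 0$; the first-order part $-2bu_{tt}(\nabla u,\nabla G)$ of $dQ(G)$ vanishes at $p$, while the remaining second-order part $u_{tt}\Delta G+B_u G_{tt}-2u_{tk}G_{tk}$ is the trace of $D^2 G(p)$ against the positive-semidefinite symmetric matrix with $(t,t)$-entry $B_u$, $(i,i)$-entry $u_{tt}$ and off-diagonals $-u_{ti}$ (its determinant equals $f u_{tt}^{n-1}\ge 0$ by Schur complement against the $u_{tt}I$ block), hence is non-positive. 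Thus $dQ(G)(p)\le 0$.

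I would take $G=|\nabla u|^2+\gamma\phi(u)$ for a large constant $\gamma>0$ and a smooth function $\phi$ with $\phi'(u)<0$, $\phi''(u)\ge 0$ on the range of $u$; the $C^0$-normalization $-C_0\le u\le -1$ makes $\phi(u)=u^2$ a natural choice, giving $\phi'=2u<0$ and $\phi''=2>0$. Using the identity for $dQ(|\nabla u|^2)$ above and the chain rule $dQ(\phi(u))=\phi'(u)\,dQ(u)+\phi''(u)\,q_u(Du,Du)$ (which follows from \eqref{q0} applied to $\phi=\psi=u$ together with $dQ(u)=2f-(a+b|\nabla u|^2)u_{tt}$), one computes
\[
dQ(G)=2u_{tt}(R_{ij}u_iu_j-a_iu_i)+2f_iu_i+2q_u(\nabla u_i,\nabla u_i)+\gamma\phi'\bigl(2f-(a+b|\nabla u|^2)u_{tt}\bigr)+\gamma\phi''q_u(Du,Du).
\]
The non-negative good contributions are $-\gamma\phi'(a+b|\nabla u|^2)u_{tt}\ge 2\gamma(a+b|\nabla u|^2)u_{tt}$ (using $-u\ge 1$), together with $2q_u(\nabla u_i,\nabla u_i)\ge 0$ and $\gamma\phi''q_u(Du,Du)\ge 0$. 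The bad contributions are bounded via $|R_{ij}u_iu_j|\le C|\nabla u|^2$, $|a_iu_i|\le C|\nabla u|$, $|f_iu_i|=2f^{1/2}|(\nabla f^{1/2},\nabla u)|\le 2\sqrt{\sup f}\sup|\nabla f^{1/2}|\,|\nabla u|$ and $2\gamma|\phi'|f\le C\gamma\sup f$, with $C=C(g,\sup|\nabla a|)$. Taking $\gamma$ large enough to absorb the $O(u_{tt}|\nabla u|^2)$ and $O(u_{tt}|\nabla u|)$ bad terms with the positive $\gamma$-contributions, combined with $dQ(G)(p)\le 0$, forces $|\nabla u|^2(p)\le C_2$ for a constant $C_2=C_2(g,|u_0|_{C^1},|u_1|_{C^1},\sup f,\sup|\nabla f^{1/2}|)$.

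The main obstacle is the degenerate setting: no lower bound on $f$ or on $u_{tt}$ is assumed, so the estimate must close purely in terms of $\sup f$ and $\sup|\nabla f^{1/2}|$. The decomposition $|f_iu_i|=2f^{1/2}|(\nabla f^{1/2},\nabla u)|$ is precisely what makes this possible (it never appeals to $\inf f$). A delicate sub-case is $b=0$, where the good contribution $2\gamma b|\nabla u|^2u_{tt}$ vanishes and the $O(u_{tt}|\nabla u|^2)$ curvature term is not directly dominated by $2\gamma au_{tt}$; there one must exploit the full positivity of $2q_u(\nabla u_i,\nabla u_i)$ (which, by Schur complement against the $u_{tt}$-block, satisfies $\sum_i q_u(\nabla u_i,\nabla u_i)\ge f|\nabla u_t|^2/u_{tt}$), and if necessary refine the choice of $\phi$ or insert an additional barrier term so as to close the inequality uniformly without appealing to any lower bound on $u_{tt}$.
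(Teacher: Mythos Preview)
Your approach is the paper's approach: the same auxiliary function $G=|\nabla u|^2+\gamma u^2$ (the paper takes half of it) and the same maximum-principle scheme. The genuine gap is that you throw away $\gamma\phi''\,q_u(Du,Du)$ as a nonnegative term, and without it the inequality does \emph{not} close---even when $b>0$. After you absorb the curvature term $Cu_{tt}|\nabla u|^2$ into $2\gamma b\,u_{tt}|\nabla u|^2$, every remaining \emph{good} term in your display carries a factor $u_{tt}$, while the \emph{bad} terms $-|\nabla f|\,|\nabla u|$ and $2\gamma\phi' f$ do not. Since no lower bound on $u_{tt}$ is available, $dQ(G)\le 0$ at the maximum yields no bound on $|\nabla u|$. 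For $b=0$ you acknowledge the problem but do not resolve it; your proposed substitute $\sum_i q_u(\nabla u_i,\nabla u_i)\ge f|\nabla u_t|^2/u_{tt}$ controls $\nabla u_t$, not $\nabla u$, and the phrase ``refine the choice of $\phi$ or insert an additional barrier'' is not an argument.

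The missing idea is to \emph{use} $q_u(Du,Du)$ rather than discard it. At an interior maximum of $G$ one has $G_t=u_ku_{tk}+\gamma u\,u_t=0$, which eliminates the cross term and gives
\[
q_u(Du,Du)=u_{tt}|\nabla u|^2+B_u\,u_t^2+2\gamma u\,u_t^2 .
\]
The piece $\gamma\,u_{tt}|\nabla u|^2$ is what absorbs the curvature contribution $C_0u_{tt}|\nabla u|^2$ (this is indispensable when $b=0$). For the $u_{tt}$-free bad term one applies the arithmetic--geometric inequality together with the equation and the normalization $|u_t|\ge 1$:
\[
u_{tt}|\nabla u|^2+B_u\,u_t^2\;\ge\;2\sqrt{u_{tt}B_u}\,|u_t|\,|\nabla u|\;\ge\;2\sqrt{f}\,|\nabla u|.
\]
Multiplying by (half of) $\gamma$ produces a term $\gamma\sqrt{f}\,|\nabla u|$ which dominates $|\nabla f|\,|\nabla u|=2\sqrt{f}\,|\nabla f^{1/2}|\,|\nabla u|$ once $\gamma\ge 2\sup|\nabla f^{1/2}|$; this is exactly how the dependence on $\sup|\nabla f^{1/2}|$ (and not on $\inf f$) enters. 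With these two uses of $q_u(Du,Du)$ the inequality $dQ(G)(p)\le 0$ forces the desired bound on $|\nabla u|(p)$.
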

\begin{proof}
To bound $\nabla
u$, take
\[
h=\frac{1}{2}\left(|\nabla u|^2+\l u^2\right),
\]
where $\l$ is a constant determined later. We want to show that $h$
is bounded. Namely, there exists a constant $C_1$ depending only on
$\sup f, |\nabla f^{1/2}|$ and the boundary data such that
\[ \max h\leq C_2.
\]
Since $h$ is uniformly bounded on the boundary,  we assume  $h$
takes its maximum at $(p, t_0)\in M\times (0, 1)$. 
We compute
\[dQ(u^2)=-2u(a+b|\nabla u|^2)+2fu+2q_u(Du, Du)\]
It follows that, using \eqref{q1},
\begin{equation}\label{q2}
\begin{split}dQ(h)=&u_{tt}(R_{ij}u_iu_j-a_iu_i)+f_iu_i+q_u(\nabla u_i, \nabla u_i)\\&-\l u (a+b|\nabla u|^2)+\l fu+\l q_u(Du, Du)\\
\geq &-C_0u_{tt}(|\nabla u|^2+|\nabla u|)-|\nabla u| |\nabla f|\\
&-\l u (a+b|\nabla u|^2)+\l fu+\l q_u(Du, Du),
\end{split}
\end{equation}
where $C_0$ depends on $\max |Ric|$ and $|\nabla a|$. 
At the point $p$, since $Dh =0$, we have
\[
h_t=u_k u_{tk}+\l u u_t=0
\]
We compute
\[
\begin{split}
q_u(Du, Du)=&u_{tt}|\nabla u|^2+B_u u_t^2-2u_{tk} u_t u_k\\
=&u_{tt}|\nabla u|^2+B_u u_t^2+2\l u u_t^2
\end{split}
\]
If $b>0$, we compute
\[
dQ(h)> u_{tt}(\l |\nabla u|^2-C_0|\nabla u|^2-C_0|\nabla u|)+\l b |\nabla u|^2-|\nabla u| |\nabla f|+\l fu,
\]
 At the point $p$ ($h$ achieves its maximum), $dQ(h)\leq 0$. This follows that
 \[
 u_{tt}(\l |\nabla u|^2-C_0|\nabla u|^2-C_0|\nabla u|)+\l b |\nabla u|^2-|\nabla u| |\nabla f|+\l fu\leq 0
 \]
 Hence  this gives the bound $|\nabla u|(p)\leq C_2$ if $\l$ is sufficiently large. 
 If $b=0$, we compute
 \[
 dQ(h)> -C_0u_{tt}(|\nabla u|^2+|\nabla u|)-|\nabla u| |\nabla f|+\l u_{tt}|\nabla u|^2+\l B_u |u_t|^2+\l f u. 
 \]
Note that
\[
u_{tt}|\nabla u|^2+B_u u_t^2\geq 2\sqrt{u_{tt}B_u} |\nabla u||u_t|\geq  2\sqrt{f} |\nabla u|
\]
We compute that
\[
dQ(h)>u_{tt}(\l |\nabla u|^2/2-C_0|\nabla u|^2-C_0|\nabla u|)+\l \sqrt{f}|\nabla u|-|\nabla f||\nabla u|+\l f u. 
\]
It follows that, at $p$,
\[
|\nabla u|(p)\leq C_2,
\]
where $C_2$ depends on $|\nabla f^{1/2}|$ in addition. This completes the proof.
\end{proof}

\subsection{$C^{2}$ estimates}
First we derive the boundary estimates. Due to the flatness of the boundary (in $t$ direction), the estimates of ``normal-normal" direction $ u_{tt}$ can be obtained from the equation that
\[
 u_{tt}\leq B_u^{-1} (|\nabla  u_t|^2+f),
\]
once the boundary estimates hold for $|\nabla  u_t|$. To bound the mixed term $|\nabla u_t|$ in the boundary estimates, we construct barrier functions  using similar ideas in  \cite{GuS, Guan}. The argument is purely local.
\begin{lemma} There exists a uniform constant $C_2$, such that at $t=0$ and $t=1$,
\[
 u_{tt}, |\nabla  u_t|\leq C_2
\]
where $C_2=C_2(g, |u_0|_{C^2}, |u_1|_{C^2}, |\nabla f^{1/2}|, \sup f)$
\end{lemma}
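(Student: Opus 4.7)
The plan is to reduce the $u_{tt}$ bound on the boundary to a bound on $|\nabla u_t|$, and then obtain the latter by a Guan--Spruck type barrier argument at each boundary point. On $\{t=0\}$ the function $u=u_0$ is prescribed, hence $B_u = B_{u_0}$ is a fixed smooth function with $\inf_M B_{u_0}>0$ (as $u_0\in\cH$). The equation rearranges to
\[
u_{tt}(x,0) = \frac{|\nabla u_t(x,0)|^2 + f(x,0)}{B_{u_0}(x)},
\]
so it suffices to bound $|\nabla u_t|$ at each boundary point. The face $\{t=1\}$ is symmetric.

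Fix a boundary point $(x_0, 0)$, take normal coordinates on $M$ at $x_0$, and for each tangential index $k$ introduce
\[
w(x,t) := u_{,k}(x,t) - (u_0)_{,k}(x),
\]
which vanishes identically on $\{t=0\}$ and satisfies $w_t(x_0,0) = u_{tk}(x_0,0)$. Combining \eqref{g0} with the identity $dQ((u_0)_{,k}) = u_{tt}L_{B_u}((u_0)_{,k})$ (since $(u_0)_{,k}$ depends only on $x$) gives
\[
dQ(w) = u_{tt}\bigl(R_{kj}u_j - a_{,k} - L_{B_u}((u_0)_{,k})\bigr) + f_{,k},
\]
so $|dQ(w)|\leq C_3(1+u_{tt})$ on a fixed neighborhood, using the established $C^1$ bound and $|u_0|_{C^2}$.

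On the cylinder $\Omega_\rho := B_\rho(x_0)\times[0,\rho]$ with $\rho$ small but fixed, I would set
\[
\Psi := N_1(u - U_{-c}) + N_2 t + N_3|x-x_0|^2,
\]
with constants tuned in the order $c\gg 1$, then $N_1\gg N_3\gg 1$, then $N_2$ large. By Lemma \ref{L-2-3}, $u-U_{-c}\geq 0$ with equality on $\{t=0\}\cup\{t=1\}$, so $\Psi\geq 0$ on $\Omega_\rho$ and $\Psi(x_0,0)=0$. Expanding
\[
dQ(u-U_{-c}) = \bigl[2f-(a+b|\nabla u|^2)u_{tt}\bigr] - dQ(U_{-c}),
\]
the decisive contribution is the strongly negative $-2cB_u$ coming from $(U_{-c})_{tt}=2c$, while the remaining $O(1)+O(|\nabla u_t|)$ terms are absorbed via $|\nabla u_t|^2\leq u_{tt}B_u$ and Young's inequality. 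Hence $dQ(u-U_{-c})\leq -C_4(1+u_{tt})$ with $C_4$ as large as we wish. On the other hand $dQ(|x-x_0|^2)= u_{tt} L_{B_u}(|x-x_0|^2)\leq C_5 u_{tt}$ is positive and therefore hostile. Taking $N_1\gg N_3$ forces $dQ(\Psi)\leq -(C_3+1)(1+u_{tt})$, so $dQ(\Psi\pm w)<0$ throughout $\Omega_\rho$. Finally $N_2,N_3$ are chosen (for the fixed $\rho$) so that $\Psi\geq|w|$ on the top face $\{t=\rho\}$ and the lateral face $\{|x-x_0|=\rho\}$; this is possible because $|w|$ is globally controlled by the $C^1$ bound.

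The pointwise maximum principle used already in the proof of Lemma \ref{L-2-3} now applies: at an interior minimum of $v := \Psi\pm w$ the Cauchy--Schwarz argument there forces $dQ(v)\geq 0$, contradicting $dQ(v)<0$. Hence $\Psi\pm w\geq 0$ throughout $\Omega_\rho$; since both sides vanish at $(x_0,0)$ the one-sided $t$-derivative gives $|u_{tk}(x_0,0)| = |w_t(x_0,0)|\leq \Psi_t(x_0,0)$, which is uniformly bounded. Varying $k$ controls $|\nabla u_t|(x_0,0)$, and the equation then controls $u_{tt}(x_0,0)$. The face $\{t=1\}$ is handled by the same barrier, since $U_{-c}$ also agrees with $u$ there. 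The principal obstacle is the algebraic balance in the third paragraph: one must verify that the $-2cB_u$ contribution from $u-U_{-c}$ genuinely dominates the positive $N_3 u_{tt}L_{B_u}(|x-x_0|^2)$ term, the $|\nabla u_t|$ cross error (after Young), and the $C_3(1+u_{tt})$ from $dQ(w)$; once this hierarchy of constants is pinned down, the remaining steps are standard.
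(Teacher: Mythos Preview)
Your approach is essentially the same Guan--Spruck barrier argument as the paper's, with cosmetic differences: the paper uses $A(u-u_0-ct)-A(t^2+d^2)+(\nabla u-\nabla u_0)_i$ on a half-ball $\{d^2+t^2\le r^2\}$, while you use $N_1(u-U_{-c})+N_2t+N_3|x-x_0|^2\pm w$ on a cylinder. Both work, and the reduction of $u_{tt}$ to $|\nabla u_t|$ via the equation, the bound on $dQ(w)$, and the minimum-principle step are all correct.

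There is one imprecise claim you should fix. You write that the cross terms in $dQ(U_{-c})$ are ``$O(1)+O(|\nabla u_t|)$'' and conclude $dQ(u-U_{-c})\le -C_4(1+u_{tt})$ with $C_4$ as large as you wish. In fact $dQ(U_{-c})$ also contains $u_{tt}L_{B_u}(U_{-c})$, an $O(u_{tt})$ term you did not list; and the coefficient of $u_{tt}$ in $dQ(u-U_{-c})$ is not made large by taking $c$ large---enlarging $c$ only strengthens the $-2cB_u$ term. What is actually true (and sufficient) is
\[
dQ(u-U_{-c})\le 2f-\bigl[(1-t)B_{u_0}+tB_{u_1}\bigr]u_{tt}-2cB_u+C|\nabla u_t|,
\]
so the $u_{tt}$ coefficient is $\le -\delta_0:=-\min(\inf B_{u_0},\inf B_{u_1})<0$, a \emph{fixed} negative number. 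After multiplying by $N_1$, the term $-N_1\delta_0\,u_{tt}$ absorbs both $N_3\,dQ(|x-x_0|^2)\le N_3C_5u_{tt}$ and the $C_3u_{tt}$ from $dQ(w)$, while $C|\nabla u_t|\le \frac{\delta_0}{2}u_{tt}+\frac{C^2}{2\delta_0}B_u$ and the leftover $-\frac{N_1\delta_0}{4}u_{tt}-cN_1B_u\le -N_1\sqrt{c\delta_0 f}$ handle the $2N_1f$ and $|\nabla f|$ terms (this is where $\sup f$ and $|\nabla f^{1/2}|$ enter). With this correction your hierarchy of constants goes through; the paper's choice $u-u_0$ in place of $u-U_{-c}$ makes this bookkeeping slightly cleaner because it gives $dQ(u-u_0)\le 2f-u_{tt}B_{u_0}$ directly, with no $|\nabla u_t|$ cross term to absorb.
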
 

\begin{proof}

We only argue for $t=0$. 
First we compute
\[
\begin{split}
dQ(u-u_0)&=-u_{tt}(\Delta u_0-b|\nabla u_0|^2+a)-bu_{tt}(|\nabla u|^2+|\nabla u_0|^2-2(\nabla u, \nabla u_0))\\
&\leq -u_{tt} B_{u_0}
\end{split}
\]
For a fix point $p\in M$, take a geodesic ball $B_r(p)\subset M$ around $p$ such that $r$ is less than injectivity radius. Consider the region \[U=\{(x, t)\in B_r(p)\times [0, 1]: d^2(x, p)+t^2\leq r^2\}\]
Take $A$ sufficiently large, and  denote
\[
h=A(u-u_0-ct)-A(t^2+d^2(x))+(\nabla u-\nabla u_0)_i,
\]
where $i=1, 2, \cdots, n$ and $d(x)=d(p, x)$ is the distance function. Note that $h$ is local function define on $\bar U$. 
We choose $c$ large enough such that $u-u_0-ct\leq 0$ and $B$ large enough such that 
$h\leq 0$ on $\p U$. We compute, using \eqref{g0},
\[
dQ((\nabla u-\nabla u_0)_i)\leq C_0 u_{tt}+|\nabla f|.
\]
Note that for $x\in B_r(p)$ for $r$ sufficiently small, 
\[dQ(d^2)=u_{tt}(\Delta d^2-2b(\nabla u, \nabla d^2))\geq 2u_{tt}(n-2bd |\nabla u|)>0.\] 
It then follows that
\[
dQ(h)\leq -A u_{tt}B_{u_0}-2A B_u+C_0 u_{tt}+|\nabla f|
\]
Choose $A B_{u_0}-C_0\geq 1$ and $A\sqrt{f}\geq |\nabla f|$, we get that
\[
dQ(h)\leq -u_{tt}-2AB_u+|\nabla f|\leq -2\sqrt{A f}+|\nabla f|\leq 0.
\]
By the maximum principle, it follows that $h\leq 0$ in $U$. Since $h(p, 0)=0$, it follows that
$\p_t h(p, 0)\leq 0$. 
Since $i$ and $p$ are arbitrary, 
this implies that $|\nabla u_t|(p, 0)\leq C_2$ at $t=0$, where $C_2$ depends on $|\nabla f^{1/2}|$ in particular. 
\end{proof}

Now we derive the interior $C^2$ estimates. 
We need some preparations to simply the computations. 
We write $r=(r_i)$ and 
\[
Q(r)=r_0 r_1-\sum_{i\geq 2} r_i^2,
\]
where $r=(u_{tt}, B_u, \nabla_i u_t)$. 
Then the equation $Q(r)=f$ can be written as 
$G(r)=\log f. $
Denote, for $0\leq i\leq n+1$, 
\[
Q^i=\frac{\p Q}{\p r_i}, Q^{i, j}=\frac{\p^2 Q}{\p r_i\p r_j}
\]
With this notation, we also record the linearization of $Q(r)$. 
We have
\begin{equation}\label{second0}
dQ(\psi)= u_{tt} (\Delta \psi-2b(\nabla u, \nabla \psi))+B_u \psi_{tt}-2 u_{tk}\psi_{tk} \end{equation}
If we write $(R_i)=(\psi_{tt}, L_{B_u}\psi, \nabla \psi_t)$, then
\[
dQ(\psi)=\sum_i Q^i R_i. 
\]

First we have the following interior estimates.
\begin{lemma}
There is a
uniform positive constants $C_2$ such that
\[
 u_{tt}\leq C_2,
\]
where $C_2=C_2( g, |u_0|_{C^2}, |u_1|_{C^2}, \sup f^{-1}|f_t|^2, \sup -f_{tt}, \sup f)$. 
\end{lemma}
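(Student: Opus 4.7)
The plan is a maximum-principle argument on the auxiliary function
\[
\phi = \log u_{tt} - \mu u - A t(1-t),
\]
where $\mu, A > 0$ are small constants to be chosen in terms of $a$, $b$ and the previously established $C^1$ bounds. Since the preceding boundary estimate together with the equation $u_{tt} = B_u^{-1}(f + |\nabla u_t|^2)$ controls $u_{tt}$ on $\partial(M \times [0,1])$, it suffices to bound $\phi$ at an interior maximum $(p, t_0)$.

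The decisive analytic input is the concavity of $\log Q$ from Lemma \ref{L-2-2}. Differentiating $\log Q(u) = \log f$ twice in $t$ and applying the concavity inequality to the vector $R^{(1)} = (u_{ttt}, (B_u)_t, \nabla u_{tt})$, which satisfies $\sum_i Q^i R^{(1)}_i = dQ(u_t) = f_t$, I expect to arrive at the key estimate
\[
dQ(u_{tt}) \geq f_{tt} - \frac{f_t^2}{f} + 2b\, u_{tt}|\nabla u_t|^2.
\]
This is precisely where $\sup(-f_{tt})$ and $\sup f^{-1}|f_t|^2$ enter. Using the identity $dQ(\log u_{tt}) = dQ(u_{tt})/u_{tt} - q_u(d\log u_{tt}, d\log u_{tt})$, this provides the required lower bound for $dQ(\log u_{tt})$ at the interior max.

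Combining with the identities $dQ(u) = 2f - (a+b|\nabla u|^2)u_{tt}$ and $dQ(t(1-t)) = -2B_u$ from the preparatory propositions, and with the Cauchy--Schwarz bound $q_u(d\log u_{tt}, d\log u_{tt}) \leq 2\mu^2 q_u(du, du) + 2A^2 B_u$ (valid because $q_u$ is a positive semidefinite form and $d\log u_{tt} = \mu du + A(1-2t_0) dt$ at the max), the inequality $dQ(\phi) \leq 0$ rearranges to
\[
\mu(a + b|\nabla u|^2) u_{tt} + 2(A - A^2 - \mu^2 u_t^2) B_u + 2b|\nabla u_t|^2 \leq 2\mu f + 2\mu^2 q_u(du, du) + \frac{f_t^2/f - f_{tt}}{u_{tt}}.
\]

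The main obstacle will be absorbing the cross term $-4\mu^2 u_t(\nabla u_t, \nabla u)$ buried inside $2\mu^2 q_u(du, du)$, which is delicate precisely when $b=0$, since then no $|\nabla u_t|^2$ term is available on the left to dominate it. The trick I plan to use is a weighted Cauchy--Schwarz driven by the equation itself: since $|\nabla u_t|^2 = u_{tt} B_u - f$,
\[
4\mu^2 |u_t||\nabla u||\nabla u_t| \leq \frac{c}{u_{tt}}|\nabla u_t|^2 + \frac{C(\mu)}{c} u_{tt} \leq c\, B_u + \frac{C(\mu)}{c} u_{tt},
\]
where $C(\mu)$ depends only on $\mu$ and the $C^1$ bounds. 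Picking $c = A/2$ absorbs $cB_u$ into the $AB_u$ term on the left; then choosing $\mu$ small (depending on $a > 0$ and the $C^1$ bounds) so that $\mu a/2$ dominates the residual coefficient of $u_{tt}$, and $A$ small enough so that $A - A^2 - \mu^2 u_t^2 \geq A/2$, the inequality collapses to $(\mu a/4) u_{tt} + A B_u \leq C$, which yields the desired bound $u_{tt} \leq C_2$ with the dependence claimed.
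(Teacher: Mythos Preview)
Your argument is essentially correct, but it is considerably more elaborate than what the paper does. The paper derives exactly the same concavity inequality you obtain,
\[
dQ(u_{tt}) \;\geq\; f_{tt} - f^{-1}f_t^2 + 2b\,u_{tt}|\nabla u_t|^2 \;\geq\; f_{tt} - f^{-1}f_t^2,
\]
but then applies the maximum principle directly to the \emph{linear} test function $u_{tt}-u$, not to $\log u_{tt}-\mu u - At(1-t)$. Since $dQ(-u) = -2f + (a+b|\nabla u|^2)u_{tt}$, one immediately gets
\[
dQ(u_{tt}-u)\;\geq\;(a+b|\nabla u|^2)\,u_{tt} + f_{tt} - f^{-1}f_t^2 - 2f,
\]
and at an interior maximum this forces $a\,u_{tt}\leq 2f + f^{-1}f_t^2 - f_{tt}$, which is the desired bound in one line.

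By taking $\log u_{tt}$ you introduce the extra negative term $-q_u(D\log u_{tt},D\log u_{tt})$, which you then have to work to absorb: this is the sole source of the cross term $-4\mu^2 u_t(\nabla u_t,\nabla u)$, of the weighted Cauchy--Schwarz splitting, and of the delicate interdependent choice of $\mu$, $A$ and $c$. None of this is needed. (Your ordering of constants is also slightly tangled: you must first fix $\mu$ small so that $\mu^2 u_t^2$ is below the maximum of $A/2-A^2$, then choose $A$; with the order as written the constraints are circular.) The logarithmic barrier is a natural reflex for fully nonlinear equations, but here the structure $dQ(u)=2f-(a+b|\nabla u|^2)u_{tt}$ already produces a favourable $+a\,u_{tt}$ term linearly, so there is no gain in passing to $\log u_{tt}$, only extra bookkeeping.
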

\begin{proof}
We can compute by $G=\log Q=\log f$
\begin{equation}\label{1st1}
G^{i}\p_t r_{i}=Q^{-1}dQ( u_t)=f^{-1}f_t. 
\end{equation}
Taking derivative again, we have
\[
G^{i, j}\p_t r_{i}\p_t r_{j}+G^{i}\p_t^2 r_{i}=f^{-1}f_{tt}-f^{-2}f_t^2. 
\]
By concavity of $G$, we have
\[
G^{i}\p_t^2 r_{i}\geq f^{-1}f_{tt}-f^{-2}f_t^2. 
\]
Note that 
\[
\p_t^2 B_u=L_{B_u} (u_{tt})-2b|\nabla u_t|^2.
\]
It follows that we have
\[
G^i\p_t^2 r_i=Q^{-1}\left(dQ(u_{tt})-2b u_{tt}|\nabla u_t|^2\right)
\]
Hence we have
\[
dQ(u_{tt})\geq f_{tt}-f^{-1} f_t^2
\]
We compute
\[
dQ(u_{tt}-u)\geq (a+b|\nabla u|^2)u_{tt}+f_{tt}-f^{-1} f_t^2-2f
\]
If $u_{tt}-u$ takes the maximum at the boundary, then by the boundary estimate this is done. 
If the maximum appears interior, at the maximum point of $u_{tt}-u$, we have
\[
u_{tt}\leq C_3,
\]
where $C_3=C_3(\sup f, \sup -f_{tt}, \sup f^{-1} f_t^2)$.
This completes the proof. 
\end{proof}

Next we want to bound $\Delta  u$. We use the similar computation relying on the concavity of $G=\log Q$. 
\begin{lemma}There exists a uniform constant $C_4$ such that
\[
\Delta  u\leq C_4,
\]
where $C_4=C_4(g, |u_0|_{C^2}, |u_1|_{C^2}, \sup f, \sup -\Delta f, \sup D f^{1/2})$
\end{lemma}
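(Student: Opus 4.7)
The plan is to run the same argument as for the $u_{tt}$-bound with $\Delta$ in place of $\partial_t^2$. First, apply $\Delta$ to the equation $G(r) = \log f$, where $r = (u_{tt}, B_u, \nabla u_t)$. By the concavity of $G = \log Q$ from Lemma \ref{L-2-2}, the Hessian term $\sum_{i,j} G^{i,j}\langle\nabla r_i, \nabla r_j\rangle$ is nonpositive, so
\[
\sum_i Q^i \Delta r_i \;\ge\; \Delta f - |\nabla f|^2/f.
\]
The right-hand side is bounded below in terms of $\sup(-\Delta f)$ and $\sup|Df^{1/2}|$.

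Next, I will rewrite the left-hand side as $dQ(B_u)$ plus controlled error. Expanding, $\sum_i Q^i\Delta r_i = B_u\Delta u_{tt} + u_{tt}\Delta B_u - 2u_{ti}\Delta u_{ti}$; the Weitzenb\"ock formula $\Delta u_{ti} = (\Delta u_t)_{,i} + R_{ij}u_{tj}$, the identity $(B_u)_{tt} = L_{B_u}(u_{tt}) - 2b|\nabla u_t|^2$ used in the previous lemma, and differentiation of the equation $u_{tt}B_u - |\nabla u_t|^2 = f$ (which packages the third-order cross terms into $2b(\nabla u,\nabla f)$) yield an identity of the form
\[
\sum_i Q^i\Delta r_i \;=\; dQ(B_u) + 2bB_u|\nabla u_t|^2 + 2b(\nabla u,\nabla f) - 4b u_{ti}u_{ij}u_{tj} - 2 u_{ti}R_{ij}u_{tj}.
\]
Combined with the concavity inequality, this gives a pointwise lower bound for $dQ(B_u)$.

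The third step is to run a maximum-principle argument on the test function $\psi = B_u - \lambda u$ (augmented by $A|\nabla u|^2$ when $b>0$; see below). If $\psi$ attains its maximum on the parabolic boundary $\{t=0,1\}$, the boundary $C^2$ estimates already proved control $B_u$ there. Otherwise, at an interior maximum $p_0$, ellipticity of $dQ$ gives $dQ(\psi)\leq 0$, hence
\[
dQ(B_u)\;\leq\;\lambda\,dQ(u)\;=\;\lambda\bigl[\,2f-(a+b|\nabla u|^2)u_{tt}\,\bigr].
\]
Substituting $|\nabla u_t|^2 = u_{tt}B_u - f$ from the equation, invoking the previously established bounds on $u_{tt}$ and $|\nabla u|$, and choosing $\lambda$ large in terms of $\sup|\mathrm{Ric}|$, $b$, $a$, and the $u_{tt}$-bound, the leading terms close up and force $B_u(p_0)\leq C_4$; since $\Delta u = B_u + b|\nabla u|^2 - a$, this proves the lemma.

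The main obstacle is the case $b>0$: via the equation, the term $-2bB_u|\nabla u_t|^2$ in the concavity lower bound becomes $-2bu_{tt}B_u^2 + 2bfB_u$, a destabilizing $O(B_u^2)$ contribution, and the Hessian coupling $4bu_{ti}u_{ij}u_{tj}$ involves exactly the second derivatives one is trying to bound. To close the argument one augments $\psi$ by $A|\nabla u|^2$ and exploits the computation of $dQ(|\nabla u|^2)$ from the excerpt: the nonnegative quadratic form $q_u(Du_i,Du_i)$ contributes a dominant $u_{tt}|\nabla^2 u|^2$ term, and since $(\Delta u)^2\leq n|\nabla^2 u|^2$ and $u_{tt} B_u\geq |\nabla u_t|^2$, choosing $A$ large enough relative to $b$ absorbs the bad $-2bu_{tt}B_u^2$ as well as the Cauchy--Schwarz bound on $4bu_{ti}u_{ij}u_{tj}$. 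The remaining curvature contribution $2u_{ti}R_{ij}u_{tj}$ is bounded by $2\sup|\mathrm{Ric}|\cdot|\nabla u_t|^2$ and is absorbed into the linear-in-$B_u$ terms.
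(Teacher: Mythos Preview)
Your concavity step and the commutator bookkeeping are fine, but the maximum--principle closure has a genuine gap: the test function $\psi=B_u-\lambda u$ (with or without the $A|\nabla u|^2$ augmentation) never produces a term that forces $B_u$ to be bounded at an interior maximum. At such a point you obtain, after your substitutions,
\[
\lambda\bigl(a+b|\nabla u|^2\bigr)u_{tt}\;-\;C_R|\nabla u_t|^2\;\le\;C,
\]
and using $|\nabla u_t|^2=u_{tt}B_u-f$ this becomes $u_{tt}\bigl(\lambda a+\lambda b|\nabla u|^2-C_RB_u\bigr)\le C$, which is vacuous once $B_u$ is large. The $A|\nabla u|^2$ term does contribute $Au_{tt}|\nabla^2 u|^2$ via $q_u$, but extracting a $B_u$--bound from $u_{tt}|\nabla^2 u|^2\le C$ leads only to $B_u\le C/\inf f$, contrary to the stated $\inf f$--independence of $C_4$.

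The paper's argument avoids all of this with two simple choices. First, it computes $dQ(\Delta u)$ rather than $dQ(B_u)$: the Bochner identity $\Delta B_u=L_{B_u}(\Delta u)-2b|\nabla^2 u|^2-2b\,\mathrm{Ric}(\nabla u,\nabla u)+\Delta a$ then yields
\[
dQ(\Delta u)\;\ge\;2b\,u_{tt}|\nabla^2 u|^2+2\,\mathrm{Ric}(\nabla u_t,\nabla u_t)-C+\Delta f-f^{-1}|\nabla f|^2,
\]
so the nonlinear $-b|\nabla u|^2$ actually contributes a \emph{favorable} term and no $A|\nabla u|^2$ augmentation is needed. Second, the test function is $\Delta u+\lambda t^2$: since $dQ(t^2)=2B_u$ and $|\nabla u_t|^2\le u_{tt}B_u\le C_2B_u$ by the already--established $u_{tt}$--bound, one gets $dQ(\Delta u+\lambda t^2)\ge (2\lambda-C)B_u-C'$, which at an interior maximum gives $B_u\le C'$ directly, independent of $\inf f$. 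Replacing your $-\lambda u$ by $+\lambda t^2$ would rescue your argument, but going through $dQ(B_u)$ remains needlessly complicated compared to the paper's route.
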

\begin{proof}We only need to control the interior maximum. We compute
\begin{equation}\label{g1}
\begin{split}
&\nabla G=f^{-1}\nabla f=G^{i}\nabla r_{i},\\
& G^{i, j}\nabla r_{i} \nabla r_{j}+G^{i}\Delta r_{i}=f^{-1}\Delta f-f^{-2}|\nabla f|^2. 
\end{split}
\end{equation}
By the concavity of $G$, we have
\begin{equation}\label{s1}
Q^i\Delta r_i \geq \Delta f-f^{-1}|\nabla f|^2.
\end{equation}

There exists a difference between $Q^{i}\Delta r_{i}$ and $dQ(\Delta  u)$ coming from communication of covariant derivatives and the nonlinear term $-b|\nabla u|^2$. 
We compute
\[
(\Delta r_{i})=(\Delta u_{tt}, \Delta B_u, \Delta u_{tk})
\]
The Bochner-Weitzenbock identity gives
\[
\Delta |\nabla u|^2=2|\nabla^2 u|^2+2(\nabla \Delta u, \nabla u)+2Ric(\nabla u, \nabla u)
\]
Hence we have
\[
\begin{split}
\Delta B_u=&\Delta (\Delta u-b|\nabla u|^2+a)\\
=&L_{B_u} (\Delta u)-2b|\nabla^2 u|^2-2b Ric(\nabla u, \nabla u)+\Delta a
\end{split}
\]
We also have
\[
\Delta u_{tk}=Ric_{kj} u_{tj}+(\Delta u)_{tk}
\]
It follows that
\begin{equation}\label{s2}
\begin{split}
Q^i\Delta r_i=&dQ(\Delta u)-2b u_{tt} \left(|\nabla^2 u|^2+Ric(\nabla u, \nabla u)\right)\\
&-u_{tt}\Delta a-2Ric(\nabla u_t, \nabla u_t)
\end{split}
\end{equation}
Combining \eqref{s1} and \eqref{s2}, we have
\begin{equation}
\label{s3}
dQ(\Delta u)\geq 2b u_{tt} |\nabla^2 u|^2+2Ric (\nabla u_t, \nabla u_t)-C_2+\Delta f-f^{-1}|\nabla f|^2
\end{equation}
Since $b\geq 0$, the nonlinear term $-b|\nabla u|^2$ results in a good term $2b u_{tt}|\nabla^2 u|^2$. 
Now we denote $v=\Delta  u+\l t^2$. Then we have
\[
dQ(\Delta  u+\l t^2)\geq 2\l B_u-C_1 |\nabla  u_t|^2-C_2+\Delta f-f^{-1}|\nabla f|^2.
\]
Since $|\nabla u_t|^2\leq  u_{tt}B_u\leq CB_u$, we can choose $\l$ sufficiently large such that
\[
dQ(\Delta  u+\l t^2)\geq B_u-C_2+\Delta f-f^{-1}|\nabla f|^2.
\]
This is sufficiently to bound $\Delta  u$ from above. 
\end{proof}

To get higher regularity, we assume that $f$ is strictly positive. 
The H\"older estimate of $D^2 u$ follows from Evans-Krylov theory using  the concavity
of $\log Q$.  Once we get the H\"older estimates of $D^2 u$, the standard
boot-strapping argument gives all higher order derivatives of
$ u$. 

\subsection{Solve the equation}
To solve \eqref{GS1} for a general positive $f$, we consider the
following continuity family for $s\in [0, 1]$
\begin{equation}\label{E-3-1}
Q(u)=(1-s)Q(U_{-c})+sf,
\end{equation}
with the boundary condition
\[
 u(\cdot, 0, s)=u_0,  u(\cdot, 1, s)=u_1,
\]
 When $c$ is big
enough, $Q(U_{-c})$ is positive and bounded away from $0$.
We shall now prove that if $f\in C^{k}(X\times [0, 1])$ with
$k\geq 2$ then we can find of solution of \eqref{GS1} such that
$ u\in C^{k+1, \beta}(X\times [0, 1])$ for any $0\leq \beta<1$.
Consider the set
\[S=\left\{s\in[0, 1]:~\mbox{the equation (\ref{E-3-1}) has a solution in}~~C^{k-1, \beta}(X\times [0, 1])\right\}
\]
Obviously $0\in S$. Hence we need only show that $S$ is both open
and close. It is clear that $Q: C^{k+1, \beta}\rightarrow C^{k-1,
\beta}$ is open if \[ B_u>0~~ \mbox{and}~~
Q(u)>0.
\]
In this case $dQ$ is an invertible elliptic operator and openness
follows. The closeness of $S$ follows from the a prior estimates
derived in Section 2. Hence Theorem \ref{T-1-1} holds.

Since our estimates on $|u|_{C^1}, u_{tt}, \Delta u, |\nabla u_t|$ does not depend on $\inf f$, 
we can solve the equation
\[
Q(u)=s f
\]
for $s\in (0, 1]$ and $f>0$. Taking $s\rightarrow 0$,
this gives a strong solution of the homogeneous equation
\[
Q(u)=u_{tt}B_u-|\nabla u_t|^2=0,
\]
which has the uniform bound on $|u|_{C^1}, u_{tt}, \Delta u, |\nabla u_t|$. This proves Corollary \ref{C1}

\begin{rmk}
For the general righthand side $f\geq 0$ (possible degenerate) such that $|Df^{1/2}|$ is uniformly bounded, we can use an approximation argument to get a strong solution, by considering for example the equation
\[
u_{tt}(\Delta u-b|\nabla u|^2+a(x))=f+s
\]
for $s\in (0, 1]$.  Letting $s\rightarrow 0$ we get a strong solution. The only technical point is that uniqueness of homogeneous/degenerate equation does not follow directly from the comparison, which requires $f>0$. On the other hand, we believe that the uniqueness should still hold. 
\end{rmk}

\begin{rmk}
It would be interesting to see whether $|\nabla^2 u|$ is uniformly bounded, independent of $\inf f$. Such a result was proved for complex Monge-Ampere equation recently by \cite{CTW}. When $n=1$, the Donaldson equation is one special case of their results and it should work also for \eqref{GS1}. On the other hand, it would be interesting to see whether such an estimate holds for $n\geq 3$. 
\end{rmk}

\section{Discussions}
When $k=1$, the nonlinear term $-b|\nabla u|^2$ in $B_u=\Delta u-b|\nabla u|^2+a$ has the ``right" sign. Hence we can treat the Donaldson equation and the Gursky-Streets equation together. 
In \cite{Chen-He} only the righthand side $f=\epsilon$ was discussed. Here we give a new argument with more streamlined computations.  This also covers the Gursky-Streets equation when $k=1$.

When $k=n$, the operator
\[
F_n(r)=r_{00}\sigma_n(R)-(T_{n-1}(R), r_{0i}\otimes r_{0i})=\sigma_{n+1}(r),
\]
hence it is just the famous Monge-Ampere operator. It is not hard to see that the theory of Monge-Ampere equation can be used directly to solve the equation
\[
F_n(r)=f.
\]
We shall skip the details.

On the other hand, the Gursky-Streets equation becomes rather subtle when  $2\leq k\leq n-1$. When $k=2$, Gursky and Streets obtained a smooth solution with uniform $C^1$ bound for a perturbed equation \cite{GS}. Very recently, the second author solved the Gursky-Streets equation with uniform $C^{1, 1}$ bound, for $n\geq 4$. There are several subtle points. First of all, the concavity of the operator  $\log F_k (r)$ is rather subtle for $k=2$, and it is still unknown for $3\leq k\leq n-1$; see \cite{He17} for the discussion and the conjecture on the concavity. The estimate of second order, in particular $\Delta u$ appears to be very subtle. 

Lastly, we introduce a family of operators, which is the complex companion of $F_k$. Let $u: \R\times \C^n\rightarrow \R$ be a real valued function. Consider the following $(n+1)\times (n+1)$ matrix
 \[
r=\begin{pmatrix} r_{00} &r_{0i}\\
 \bar r_{0i} & R
\end{pmatrix}
\]
where $R$ is a $n\times n$ Hermitian matrix. We take $R=\p\bar\p u$ and 
\[
r=\begin{pmatrix} u_{tt} &\p u_t\\
 \bar \p u_t & \p\bar\p u
\end{pmatrix}
\]
Denote the operator, $1\leq k\leq n$,
\[
G_k(r)= u_{tt} \sigma_k(\p\bar\p u)-(T_{k-1}(\p\bar \p u), \p u_t\otimes \bar \p u_t),
\]
where $T_{k-1}(R)_{i\bar j}=\sigma_k(R) \frac{\p \log \sigma_k(R)}{\p R^{i\bar j}}$. 
When $k=1$, we get that
\[
G_1(r)=u_{tt}\Delta u-|\nabla u_t|^2
\]
is the Donaldson operator on $\R\times \C^n$. When $k=n$, 
\[
G_n(r)=u_{tt}\sigma_n (\p\bar\p u)-(T_{k-1}(\p\bar \p u), \p u_t\otimes \bar \p u_t)
\]
is a special case of the complex Monge-Ampere operator.  Actually this operator is the operator underline the geodesic equation in space of K\"ahler metrics,
\[
\phi_{tt}-|\nabla \phi|^2_{\omega_\phi}=0,
\]
which was studied extensively in literature. Similar as in \cite{He17}, we conjecture, 
\begin{conj}
For $2\leq k\leq n-1$, 
we conjecture that the operator
$\log G_k (r)$
is concave on $r$, for $r_{00}>0$, $\p\bar\p u$ in $\Gamma_k^+$ cone and $G_k(r)>0$. 
\end{conj}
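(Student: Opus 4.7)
The plan is to transpose the route used by Chen--He for the $k=1$ case (Lemma \ref{L-2-2}) and by Gursky--Streets for the real $F_2$ to the Hermitian setting. Concavity of $\log G_k$ is unitarily invariant, so at any point I would diagonalize $R=\p\bar\p u$ and write
\[
G_k(r) = r_{00}\,\sigma_k(\l) - \sum_{i=1}^n \sigma_{k-1;i}(\l)\,|r_{0i}|^2,
\qquad \sigma_{k-1;i}(\l)=\frac{\p\sigma_k}{\p\l_i}(\l),
\]
where $\l_i$ are the eigenvalues of $R$. The conjecture then reduces to showing that the Hessian of $\log G_k$ is negative semi-definite on the tangent space spanned by $(\dot r_{00},\dot R_{i\bar j},\dot r_{0i})$, with the off-diagonal variations of $R$ treated as independent directions (which contribute second-order terms to $\sigma_k$ via the standard perturbation formula for eigenvalues).

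The structural identity I would exploit is the principal-minor decomposition
\[
\sigma_{k+1}(r) = G_k(r) + \sigma_{k+1}(R),
\]
where $r$ is regarded as the full $(n+1)\times(n+1)$ Hermitian matrix. On slices with $R$ fixed, $G_k$ differs from $\sigma_{k+1}(r)$ by a constant; hence, on the cone where $r$ itself lies in the complex $\Gamma_{k+1}^+$ cone, G\aa{}rding's concavity of $\log\sigma_{k+1}$ immediately yields concavity of $\log G_k$ in the $(r_{00},r_{0i})$ block. The genuinely new content is to control the cross-second-derivatives between $R$ and $(r_{00},r_{0i})$, and to check that the assumed cone ($R\in\Gamma_k^+$, $r_{00}>0$, $G_k>0$) is compatible with the $\Gamma_{k+1}^+$ condition on $r$.

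For $k=2$ I would carry out the Gursky--Streets computation verbatim in the complex case: their Hessian of $\log F_2$ splits into a pure $R$-block handled by Newton--Maclaurin inequalities on $\l$, a pure off-diagonal $(r_{00},r_{0i})$ block handled by Cauchy--Schwarz as above, and a cross term that is absorbed by completing the square. Since $\sigma_k$ on Hermitian matrices depends only on the (real) eigenvalues, the same algebraic inequalities apply; the only adjustment is to treat $r_{0i}$ and $\bar r_{0i}$ as independent complex directions, which produces harmless factors of $2$ but leaves the sign pattern intact.

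The main obstacle is the range $3\le k\le n-1$, where even the real analog is open (see the discussion in \cite{He17}), so no reduction to known facts is available. A plausible attack would be to realise $G_k(r)$ as (a slice of) a hyperbolic polynomial in the sense of G\aa{}rding--Br\"and\'en, from which log-concavity on the forward cone would be automatic; alternatively, one could attempt a direct argument via a refined Newton--Maclaurin chain on the $\l_i$ combined with a sharp Cauchy--Schwarz estimate against the $T_{k-1}(R)$-weighted norm of $\p u_t$. Either of these steps is exactly where the argument for the real conjecture stalls, and I expect the complex case to break at the same place.
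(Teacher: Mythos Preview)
This statement is presented in the paper as a \emph{conjecture}; the authors give no proof and explicitly record, in the discussion immediately preceding it, that even the real analog for $3\le k\le n-1$ is open. There is therefore no argument in the paper to compare your proposal against, and you yourself close by saying you ``expect the complex case to break at the same place'' as the real one. What you have written is a research outline, not a proof, and should be labelled as such.

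Two remarks on the outline. First, the cone compatibility you flag as needing to be checked in fact \emph{fails}: the domain in the conjecture is $\{R\in\Gamma_k^+,\ r_{00}>0,\ G_k(r)>0\}$, and this does not force $r\in\Gamma_{k+1}^+$. With $r_{0i}=0$ one has $G_k(r)=r_{00}\sigma_k(R)>0$ for every $r_{00}>0$, while $\sigma_{k+1}(r)=r_{00}\sigma_k(R)+\sigma_{k+1}(R)$ is negative whenever $R\in\Gamma_k^+\setminus\overline{\Gamma_{k+1}^+}$ and $r_{00}$ is small enough. So the G\aa{}rding route through $\log\sigma_{k+1}(r)$ cannot cover the full domain of the conjecture; and even on a slice with $R$ fixed, the passage from concavity of $\log\sigma_{k+1}(r)$ to concavity of $\log\bigl(\sigma_{k+1}(r)-\sigma_{k+1}(R)\bigr)$ is not the formal consequence you assert. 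The slice concavity in $(r_{00},r_{0i})$ is true, but it comes from a direct computation (after rescaling it reduces to concavity of $\log(y-|w|^2)$, exactly as in Lemma~\ref{L-2-2}), not from G\aa{}rding. Second, the assertion that the $k=2$ real computation transfers ``verbatim'' is not a proof until it is carried out: the second variation of $\sigma_2$ under off-diagonal Hermitian perturbations of $R$, and the cross terms with the independent directions $r_{0i}$ and $\bar r_{0i}$, must actually be written down and the completion-of-the-square step redone with the complex pairing. Absent that, the $k=2$ complex case is still open.
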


\bibliographystyle{plain}

\end{document}